\numberwithin{equation}{section}
\newtheorem{theorem}[equation]{Theorem}
\newtheorem{lemma}[equation]{Lemma}
\newtheorem{proposition}[equation]{Proposition}
\newtheorem{corollary}[equation]{Corollary}
\theoremstyle{definition}
\newtheorem{definition}[equation]{Definition}
\theoremstyle{remark}
\newtheorem{remark}[equation]{Remark}
\newcommand{\uset}[1]{\text{$\uparrow\hspace{-0.1cm}{#1}$}}				
\newcommand{\usetr}[2]{\text{$\uparrow_{\scriptscriptstyle{#2}}\hspace{-0.1cm}{#1}$}} 
\newcommand{\lset}[1]{\text{$\downarrow\hspace{-0.1cm}{#1}$}}				
\newcommand{\lsetr}[2]{\text{$\downarrow_{\scriptscriptstyle{#2}}\hspace{-0.1cm}{#1}$}} 
\newcommand{\scj}{\subseteq}
\newcommand{\til}[1]{\widetilde{#1}}
\newcommand{\cov}{\mathcal{C}}
\newcommand{\covd}{\mathcal{D}}
\newcommand{\covt}{\mathcal{T}}
\newcommand{\germ}{\text{germ}}
\newcommand{\gr}[1]{\mathcal{G}(#1)}
\newcommand{\dom}[1]{\mathbf{d}(#1)}
\newcommand{\ran}[1]{\mathbf{r}(#1)}
\newcommand{\Sp}[1]{\mathsf{Sp}(#1)}
\newcommand{\grp}{\mathcal{G}}
\newcommand{\bis}{\mathsf{B}}
\newcommand{\filt}{\mathsf{F}}
\newcommand{\tfilt}{\mathsf{T}}
\newcommand{\ufilt}{\mathsf{U}}
\newcommand{\grl}{\mathcal{L}}
\newcommand{\grh}{\mathcal{H}}
\newcommand{\Image}{\operatorname{Im}}
\begin{document}
	
\title{Coverages on Inverse Semigroups}
\author{Gilles G. de Castro}
\address{Departamento de Matemática, Centro de Ciências Físicas e Matemáticas, Universidade Federal de Santa Catarina, 88040-970 Florianópolis SC, Brazil.}
\email{gilles.castro@ufsc.br}

\keywords{Inverse semigroup, coverage, étale groupoid, nucleus, tight filter}
\subjclass[2010]{Primary: 20M18, Secondary: 06A12, 06D22, 18B40, 22A22}

\begin{abstract}
First we give a definition of a coverage on a inverse semigroup that is weaker than the one gave by a Lawson and Lenz and that generalizes the definition of a coverage on a semilattice given by Johnstone. Given such a coverage, we prove that there exists a pseudogroup that is universal in the sense that it transforms cover-to-join idempotent-pure maps into idempotent-pure pseudogroup homomorphisms. Then, we show how to go from a nucleus on a pseudogroup to a topological groupoid embedding of the corresponding groupoids. Finally, we apply the results found to study Exel's notions of tight filters and tight groupoids.
\end{abstract}

\maketitle

\section{Introduction}

Stone's duality between Boolean algebras and what is now called Stone spaces \cite{MR1507106} was one of the first steps in studying the topology on a set in a more algebraical setting. His work was vastly generalized, for example to the duality between sober spaces and spatial frames \cite{MR698074,MR2868166,MR1002193}. In this context, Johnstone's definition of a coverage on a semilattice \cite{MR698074}, used to present frames using generators and relations, can be also used to impose relations on open sets of a topological space.

Another result connecting topological spaces with algebras is the Gelfand-Naimark duality between commutative C*-algebras and locally compact Hausdorff spaces \cite{MR0009426} by means of a subalgebra of the algebra of all complex valued continuous functions from a topological space. One can then think that a noncommutative C*-algebra represents a complex valued functions of a virtual object thought to be a noncommutative space. This led Connes to the development of Noncommutative Geometry \cite{MR1303779}. A concrete mathematical object that can play the role of the noncommutative space is a topological groupoid \cite{MR1303779,MR584266}, and it is often useful to describe a C*-algebra using groupoids in order to use the toolkit started in \cite{MR584266}.

One interest problem that arises is to generalize the duality between topological spaces and frames by replacing topological spaces with topological groupoids (or even more general objects). One major milestone in this direction is the work of Resende \cite{MR2304314}, where he relates étale groupoids, quantales and (abstract) pseudogroups. In this paper we are more interested in the relation between étale groupoids and pseudogroups, the latter being a certain subclass of inverse semigroups.

Going back to C*-algebras, inverse semigroups already appears in Renault's monograph \cite{MR584266} and it is one tool in describing a C*-algebras as in Paterson's book \cite{MR1724106}. Paterson's universal groupoid from an inverse semigroup often did not give the correct groupoid to describe several classes of C*-algebras, but is was needed to consider a reduction of this groupoid such as he has done in \cite{MR1962477} for graphs. This reduction was described in terms of the object used to build an inverse semigroup, and not in terms of the inverse semigroup itself. This led Exel to define the notion of tight filters and tight groupoids \cite{MR2419901}. The tight groupoid is then the correct groupoid in several cases such as for higher-rank graphs \cite{MR2419901} and for labelled spaces \cite{MR3648984,MR3680957,Gil3} both generalizing the graph case. Another possible generalization for graphs is that of topological graphs defined by Katsura in \cite{MR2067120}, however independently of the choice of the inverse semigroup, the tight groupoid will not coincide, in general, with Yeend's groupoid from a topological graph \cite{MR2301938}, since the former is always an ample groupoid, whereas the latter needs not to be. This was the author's main motivation to \cite{de_castro_2020}, where the unit space of Yeend's groupoid was described using Johnstone's notion of coverage on a semilattice.

In \cite[Section 2]{de_castro_2020}, the author showed how to use coverages on semilattices in order to impose relations on the topology of a topological space. One of the main goal of this paper is to generalize this construction by replacing semilattice with inverse semigroup, topological space with étale groupoid and topology with the set of open bisections on the groupoid. For that we need a notion of coverage on a inverse semigroup. This was done in \cite{MR3077869} by Lawson and Lenz, however their definition is not exactly a direct generalization of coverage on a semilattice since they impose some more restrict conditions. Also they were able to show the existence of a pseudogroup satisfying a universal property of transforming cover-to-join maps to pseudogroup maps by imposing an extra condition on the coverage. In Section \ref{sec:coverage}, we give a new definition of coverage on a inverse semigroup that is a direct generalization of Johnstone's definition. For the existence of the pseudogroup satisfying a universal property, instead of imposing more conditions of the coverage, we impose more conditions on the maps, namely, we ask the maps to be idempotent-pure. This will not hinder the results of the remainder of the paper.

Connected to of coverages, both in the semilattice and in the inverse semigroup cases, is the notion of a nucleus. In order to impose relations on the topology of a topological space, we define a nucleus on the topology. This gives a frame homomorphism, and we can use the spectrum functor that gives the duality between frames and topological spaces. Resende's work in building a pseudogroup from an étale groupoid and vice-versa in \cite{MR2304314} was the level of objects only. One approach of adding morphisms was done by Lawson and Lenz in \cite{MR3077869}, where they defined callitic maps between pseudogroups, however they do not discuss if a nucleus on a pseudogroup is a callitic map. So instead of relying on the functor built in \cite{MR3077869}, in Section \ref{sec:nucleus} we show directly how to go from a nucleus on a pseudogroup to a topological groupoid homomorphism of the corresponding étale groupoids. In fact, the proof works essentially the same for a nucleus on a inverse semigroup and the universal groupoid associated to it.

In Section \ref{sec:imposing}, we work on the problem of imposing join relations on the bisections of an étale groupoid. These relations can be restricted to relations on the topology of the unit space where we can apply the results of \cite[Section 2]{de_castro_2020}. In fact we show that the resulting groupoid after imposing the relations can be seen as a reduction of the original groupoid to a subspace of the unit space found from the results of \cite[Section 2]{de_castro_2020}.

Finally, in Section \ref{sec:tight}, we apply some of the theory developed in this paper to study tight filters and tight groupoids as defined by Exel in \cite{MR2419901}.


\section{Preliminaries}

The main purpose of this section is to recall some of the needed definitions as well as establish some notations. The first subsection is based on \cite{MR698074,MR1002193}, the second is based on \cite{MR1694900,MR1724106,resende2006lectures,MR2304314}, and the third is based on \cite{MR698074,MR3109745,MR3077869,MR2465914,MR1724106,MR1002193}.

\subsection{Posets and filters}

For a poset $(P,\leq)$ and $A\scj P$, the \emph{upper set} of $A$ is the set $\uset{A}=\{p\in P\mid \exists a\in A,a\leq p\}$, and the \emph{lower set} of $A$ is the set $\lset{A}=\{p\in P\mid \exists a\in A,p\leq a\}$. If we need to specify the poset, we also use the notations $\usetr{A}{P}$ and $\lsetr{A}{P}$, respectively. We say that $A$ is \emph{upper closed} if $A=\uset{A}$ and \emph{lower closed} if $A=\lset{A}$. A lower closed set will also be called an \emph{order ideal}.\footnote{Usually, for a set $A$ to be an order ideal we also ask for it to be non-empty and directed, that is for every $a,b\in A$, there exists $c\in A$ such that $a\leq c$ and $b\leq c$. For this paper, we use the same terminology as in \cite{MR3077869}.}

For $A\scj P$, we say that $x$ is a \emph{lower bound} of $A$ if $x\leq a$ for all $a\in A$. The greatest lower bound of $A$, if it exists, will be called the \emph{meet} of $A$ and will be denoted by $\bigwedge A$. Similarly, we define a \emph{upper bound} and the \emph{join} of a set $A$, denoted by $\bigvee A$, if it exists, is the least upper bound. We use the notations $a\wedge b$ and $a\vee b$ to mean $\bigwedge \{a,b\}$ and $\bigvee \{a,b\}$, respectively. A lower bound for $P$, if it exists, is unique and called the \emph{minimum} of $P$, usually denoted by $0$. In this case $\bigvee \emptyset=0$. Analogously, a \emph{maximum} for $P$, if it exists, is the unique upper bound for $P$ and usually denoted by $1$. In this case $\bigwedge\emptyset=1$.

We say that $P$ is a \emph{meet-semilattice}, or simply, a \emph{semilattice}, if $a\wedge b$ exists for every $a,b\in P$. If arbitrary meets and joins exist, we say that $P$ is a \emph{complete lattice}. A \emph{frame} $F$ is a complete lattice satisfying the following distributive law: $a\wedge \bigvee B=\bigvee\{a\wedge b\mid b\in B\}$ for every $a\in F$ and $B\scj F$.

A \emph{filter} in a poset $P$ is a non-empty proper subset $\xi\scj P$ such that $\xi$ is upper closed and for every $a,b\in\xi$, there exists $c\in \xi$ such that $c\leq a$ and $c\leq b$. Note that if $P$ is a semilattice, then a non-empty proper upper closed subset $\xi\scj P$ is a filter if and only if $a\wedge b\in\xi$ for every $a,b\in\xi$. A filter $\xi$ on a poset $P$ is called \emph{ultrafilter} if whenever $\eta$ is a filter in $P$ such that $\xi\scj\eta$, we have that $\xi=\eta$. A filter $\xi$ in a complete lattice $P$ is called \emph{completely prime} if for every $A\scj P$ such that $\bigvee A\in\xi$, there exists $a\in A$ such that $a\in\xi$.

\subsection{Inverse semigroups and pseudogroups}\label{subsec:inv.sem}

An \emph{inverse semigroup} $S$ is a semigroup such that for every $x\in S$, there is a unique $y\in S$ such that $xyx=x$ and $yxy=y$. The element $y$ will be denoted by $x^{-1}$. We say that $S$ has a $0$ if there exists, a necessarily unique, element $0\in S$ such that $0x=0=x0$ for every $x\in S$. An inverse semigroup with an identity is called an \emph{inverse monoid}. For an inverse semigroup $S$, the relation $x\leq y$ if $x=yx^{-1}x$ is a partial order on $S$. For other properties and equivalent formulations of the partial order we refer the reader to \cite[Section 1.4]{MR1694900}. Notice that for an inverse semigroup with $0$, for any filter $\xi$ on $S$, we have that $0\notin \xi$, since $\uset{0}=S$. A semigroup homomorphism between inverse semigroups preserves inverse.

An element $e$ on a semigroup $S$ is called an \emph{idempotent} if $ee=e$. The set of all idempotents of $S$ will be denoted by $E(S)$. If $S$ is an inverse semigroup, we have that $E(S)=\{x^{-1}x\mid x\in S\}$, moreover the product restricted to $E(S)$ is commutative, and with the order induced by the order on $S$, we have that $E(S)$ is a semilattice with meet given by the product, that is, $e\wedge f=ef$ for every $e,f\in E(S)$. On the other hand, every semilattice can be thought as being an inverse semigroup consisting only of idempotents by considering the meet of two elements as the operation.  We say that a semigroup homomorphism $\phi:S\to T$ is \emph{idempotent-pure} if for every $s\in S$ and $t\in E(T)$ such that $\phi(s)=t$, we have that $s\in E(S)$.

For an inverse semigroup $S$, $x,y\in S$ are called \emph{compatible} if $x^{-1}y,xy^{-1}\in E(S)$. A subset $A\scj S$ is called \emph{compatible} if for every $a,b\in A$, we have that $a$ and $b$ are compatible. It can be shown that for a non-empty subset $A\scj S$, if $\bigvee A$ exists, then $A$ is compatible. We say that $S$ is \emph{infinitely distributive} if for every non-empty subset $A\scj S$ such that $\bigvee A$ exists and for every $x\in S$, we have that $\bigvee xA$ and $\bigvee Ax$ also exist, and the following equalities hold $x\bigvee A = \bigvee xA$ and $\bigvee (Ax)=\left(\bigvee A\right)x$.

A \emph{pseudogroup} $P$ is an infinitely distributive inverse monoid with $0$ such that for every non-empty compatible subset $A\scj P$ we have that $\bigvee A$ exists. Notice that, in this case, $E(P)$ is a frame. A \emph{pseudogroup homomorphism} is a semigroup homomorphism between pseudogroups that preserves joins of compatible sets. A filter $\xi$ on $P$ will be called \emph{completely prime} if for every non-empty compatible subset $A\scj P$ such that $\bigvee A\in\xi$, there exists $a\in A$ such that $a\in A$.

\subsection{Groupoids of filters}\label{subsec:groupoid}

A \emph{groupoid} is a set $\grp$ together with a partially defined binary operation $\cdot:\grp^{(2)}\scj\grp\times\grp\to\grp$, called multiplication, and unary operation $^{-1}:\grp\to\grp$, called inverse, satisfying the following conditions:
\begin{itemize}
	\item $(g^{-1})^{-1}=g$ for every $g\in \grp$;
	\item if $(f,g),(g,h)\in\grp^{(2)}$, then $(fg,h), (f,gh)\in\grp^{(2)}$ and $(fg)h=f(gh)$;
	\item for every $f\in\grp$, we have that $(f,f^{-1})\in\grp$ and if $(f,g)\in\grp^{(2)}$, then $f(gg^{-1})=f$ and $(f^{-1}f)g=g$.
\end{itemize}
We define the set of \emph{units} of $\grp$ as $\grp^{(0)}=\{gg^{-1}\mid g\in\grp\}$. We define two maps $\mathbf{d}:\grp\to\grp^{(0)}$ and $\mathbf{r}:\grp\to\grp^{(0)}$ by $\dom{g}=g^{-1}g$ and $\ran{g}=gg^{-1}$ respectively called the domain and the range maps. In fact, a groupoid can be seen as a small category for which every morphism is an isomorphism. The set $\grp^{(2)}$ can be described as the set of all pairs $(f,g)$ such that $\dom{f}=\ran{g}$. A \emph{groupoid homomorphism} between groupoid $\grp$ and $\grh$ is a map $\phi:\grp\to\grh$ such that $(f,g)\in\grp^{(2)}$ implies that $(\phi(f),\phi(g))\in\grh^{(2)}$ and $\phi(fg)=\phi(f)\phi(g)$, or from the category point of view, $\phi$ is a functor. A \emph{topological groupoid} is a groupoid $\grp$ with a topology for which the multiplication and the inverse are continuous. If moreover, the maps $\mathbf{d}$ and $\mathbf{r}$ are local homeomorphisms, we say that $\grp$ is an \emph{étale groupoid}.

For an inverse semigroup $S$, let $\grl(S)=\{A\scj S\mid A\text{ is a filter in }S\}\cup\{S\}$. For $A\in\grl(S)$, define $\dom{A}=\uset{A^{-1}A}$ and $\ran{A}=\uset{AA^{-1}}$. Also define $\grl(S)^{(2)}=\{(A,B)\mid \dom{A}=\ran{B}\}$, a partially defined multiplication by $A*B=\uset{AB}$ and an inverse by $A^{-1}=\{a^{-1}\mid a\in A\}$. With this structure $\grl(S)$ is a groupoid. The set of units $\grl(S)$ can be described as the elements $A\in\grl(S)$ such that $A\cap E(S)\neq\emptyset$. We will consider two topologies on $\grl(S)$. For an element $s\in S$, define $U_s=\{A\in\grl(S)\mid s\in A\}$, then the family $\{U_s\}_{s\in S}$ is a basis for a topology with will denote by $\tau_S$. For $s,s_1,\ldots,s_n\in S$ with $s_i\leq s$ for all $i=1,\ldots,n$, we define $U_{s:s_1,\ldots,s_n}=U_s\cap U_{s_1}^c\cap\cdots\cap U_{s_n}^c$. The family of all such sets is a basis for another topology which will be called the \emph{patch topology} and will be denoted by $\tau_{patch}$. In general, $\tau_{patch}$ if finer than $\tau_S$. Paterson's universal groupoid for an inverse semigroup can be identified with $(\grl(S),\tau_{patch})$ and it will be denoted by $\grp_u(S)$.

\begin{remark}
	If $S$ is an inverse semigroup with 0, we can show that if $A,B$ are filters in $S$ such that $\dom{A}=\ran{B}$, then $0\notin AB$, so that $\uset{AB}$ is also a filter. In this case, we define $\grl(S)$ to be only the set of filters, since if $S$ were to be an element of $\grl(S)$ it would be an isolated because $U_0$ would be $\{S\}$. With respect to the universal property of $\grp_u(S)$, this has to do with inverse semigroup homomorphisms preserving zeros or not.
\end{remark}

For a pseudgroup $P$, we define the groupoid $\gr{P}$ as the set of all completely prime filters on $P$ with the same structure defined for $\grl(P)$. For $\gr{P}$, we will only consider the topology given by the sets $V_s=U_s\cap\gr{P}$, where $U_s$ is as above.

For an étale groupoid $\grp$, an open bisection of $\grp$ is an open subset $A\scj\grp$ such that the maps $\mathbf{d}$ and $\mathbf{r}$ are one-to-one when restricted to $A$. The set of all open bisections $\bis(\grp)$ is a pseudogroup with the natural multiplication. For an element $g\in\grp$ the set $F_g=\{A\in\bis(\grp)\mid g\in A\}$ is a completely prime filter. The map $\eta:\grp\to\grp(\bis(\grp))$ given by $\eta(g)=F_g$ is a well-defined groupoid homomorphism. We say that $\grp$ is \emph{sober} if $\eta$ is a homeomorphism.

\begin{remark}
	A topological space $X$ can be seen as a groupoid by considering $X=\grp=\grp^{(0)}$ and $\grp^{(2)}$ the diagonal of $X\times X$. In this case $\bis(X)$ is the topology on $X$ and the above definition of sober coincides with the usual one for topological spaces. An equivalent definition of sober space is a $T-0$ space such that for every meet-irreducible open proper subset $U$ of $X$, there exits $x\in X$ such that $U=X\setminus\overline{\{x\}}$ \cite{MR2868166}. On the other direction, if $F$ is a pseudogroup consisting only of idempotents, then $F$ can be seen as a frame. In this case $\grp(F)$ is a topological space which is called the \emph{spectrum} of $F$ and will be denoted by $\Sp{F}$.
\end{remark}


\section{Coverages on inverse semigroups}\label{sec:coverage}

In this section we give a new definition of coverage on a inverse semigroup and we show that there exists a pseudogroup satisfying a certain universal property with respect to this coverage.

\begin{definition}\label{def:coverage}
	Let $S$ be a inverse semigroup and $\cov=\{\cov(a)\}_{a\in S}$ be a family where $\cov(a)$ is a set of subsets of $\lset{a}$. We say that $\cov$ is a \emph{coverage} on $S$ if for every $a,b\in S$ and $X\in\cov(a)$, we have that $bX\in\cov(ba)$ and $Xb\in\cov(ab)$. In this case, the elements of $\cov(a)$ are called \emph{coverings} of $a$. And we say that $\cov$ is a \emph{strong coverage} if the following conditions are satisfied:
	\begin{itemize}
		\item[(R)] $\{a\}\in\cov(a)$ for all $a\in S$.
		\item[(I)] If $X\in\cov(a)$ then $X^{-1}\in\cov(a^{-1})$.
		\item[(MS)] If $X\in\cov(a)$ and $Y\in\cov(b)$ then $XY\in\cov(ab)$.
		\item[(T)] If $X\in\cov(a)$ and $X_i\in\cov(x_i)$ for each $x_i\in X$ then $\bigcup_i X_i\in \cov(a)$.
	\end{itemize}
\end{definition}

Using axioms (R) and (MS), we see that every strong coverage is a coverage.

Given two coverages $\cov$ and $\covd$ on an inverse semigroup $S$, notice that if we define a family $\cov\cup\covd=\{(\cov\cup\covd)(a)\}_{a\in S}$ by $(\cov\cup\covd)(a)=\cov(a)\cup\covd(a)$, we get a new coverage on $S$.

\begin{remark}
	What we are calling strong coverage is the definition of a coverage given by Lawson and Lenz in \cite{MR3077869}. There are a couple of reasons why we're giving a new definition. The first is that definition of a coverage given by Johnstone in \cite{MR698074} for semilattices does not necessarily satisfies conditions (MS) and (T) so it is not a coverage in Lawson and Lenz's sense, but it is with respect to Definition \ref{def:coverage}. The second reason has to do with the union of coverages $\cov$ and $\covd$ explained above. Again, $\cov\cup\covd$ does not necessarily satisfies conditions (MS) and (T). The union of coverages will be important in Section \ref{sec:imposing}. The price we have to pay in order to work with this new definition is that we have to weaken the universal property of the pseudogroup generated by a coverage.
\end{remark}

The next proposition shows how to relate coverages on an inverse semigroup and coverages on its idempotents semilattice.

\begin{proposition}\label{prop:restriction.cov.idempotents}
	Let $S$ be an inverse semigroup and $E:=E(S)$ its semilattice of idempotents. For a coverage $\cov$ on $S$, let $\cov_{E}$ be the restriction to $E$. Then, the map $\cov\mapsto\cov_{E}$ is a bijection between coverages on $S$ and coverages $\covd$ on $E$ such that for every $e\in E$, $s\in S$ and $X\in\covd(e)$, we have that $sXs^{-1}\in\covd(ses^{-1})$.
\end{proposition}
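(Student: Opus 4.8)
The plan is to exploit the order-isomorphism between $\lset{a}$ and $\lset{a^{-1}a}$ that is available in any inverse semigroup. Write $e=a^{-1}a$. First I would record two facts. (i) For $e\in E$ the lower set $\lset{e}$ computed in $S$ already lies in $E$: if $x\le e$ then $x=ef$ for some idempotent $f$, a product of commuting idempotents, hence idempotent; thus $\lsetr{e}{S}=\lsetr{e}{E}$ and $\cov_E$ is meaningful as a family over $E$. (ii) The maps $x\mapsto a^{-1}x$ and $y\mapsto ay$ are mutually inverse bijections between $\lset{a}$ and $\lset{e}$, the key identities being $aa^{-1}x=x$ for $x\le a$ and $a^{-1}ay=y$ for $y\le e$, both immediate from the properties of the natural partial order.

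For the forward direction I would verify that $\cov_E$ is a coverage on $E$ satisfying the displayed condition. The former is immediate: for $e,f\in E$ and $X\in\cov_E(e)$, the coverage axiom for $\cov$ gives $fX\in\cov(fe)=\cov_E(fe)$, and $fe\in E$. For the latter, given $e\in E$, $s\in S$ and $X\in\cov_E(e)$, two applications of the coverage axiom give first $sX\in\cov(se)$ and then $sXs^{-1}\in\cov(ses^{-1})$; as $ses^{-1}$ is idempotent this says $sXs^{-1}\in\cov_E(ses^{-1})$.

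The core of the proof is the reconstruction formula
\[
\cov(a)=\{\,X\scj\lset{a}\mid a^{-1}X\in\cov_E(a^{-1}a)\,\},
\]
valid for \emph{any} coverage $\cov$ on $S$. I would prove it using only the coverage axiom together with (ii): if $X\in\cov(a)$ then left-multiplying by $a^{-1}$ gives $a^{-1}X\in\cov(a^{-1}a)=\cov_E(e)$; conversely, if $Y\in\cov_E(e)$ then left-multiplying by $a$ gives $aY\in\cov(ae)=\cov(a)$, and the identities in (ii) show $X\mapsto a^{-1}X$ and $Y\mapsto aY$ are mutually inverse. In particular $\cov$ is completely determined by $\cov_E$, which yields injectivity of $\cov\mapsto\cov_E$ and simultaneously tells us how to invert it.

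For surjectivity, given a coverage $\covd$ on $E$ satisfying the extra condition, I would define $\cov(a)=\{X\scj\lset{a}\mid a^{-1}X\in\covd(a^{-1}a)\}$ and check that it is a coverage with $\cov_E=\covd$. The equality $\cov_E=\covd$ and the fact that each covering is a subset of $\lset{a}$ are routine from (i)--(ii). Writing a covering of $a$ in the form $aX$ with $X\in\covd(e)$, the two halves of the coverage axiom separate neatly: for left multiplication one computes $(ba)^{-1}(baX)=gX$ with $g:=(ba)^{-1}(ba)\le e$, and $gX\in\covd(ge)=\covd(g)$ because $\covd$ is a coverage on $E$, so $baX\in\cov(ba)$; for right multiplication one computes $(ab)^{-1}(aXb)=b^{-1}Xb$, and now the extra condition applied with $s=b^{-1}$ gives exactly $b^{-1}Xb\in\covd(b^{-1}eb)=\covd((ab)^{-1}(ab))$, so $aXb\in\cov(ab)$. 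I expect this right-multiplication computation to be the main obstacle: identifying $(ab)^{-1}(aXb)=b^{-1}Xb$ and seeing that closure under this operation is precisely the conjugation hypothesis on $\covd$ is what dictates the exact form of the extra condition in the statement.
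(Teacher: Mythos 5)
Your proposal is correct and follows essentially the same route as the paper: both reconstruct $\cov(a)$ from $\cov_E(a^{-1}a)$ via the order-isomorphism $x\mapsto a^{-1}x$ between $\lset{a}$ and $\lset{a^{-1}a}$, and both use the conjugation hypothesis on $\covd$ precisely to handle right multiplication. The only cosmetic difference is that the paper defines the inverse map symmetrically as $\til{\covd}(s)=\{sX\mid X\in\covd(s^{-1}s)\}\cup\{Xs\mid X\in\covd(ss^{-1})\}$, whereas your one-sided definition together with the explicit reconstruction formula packages injectivity a bit more directly; the two definitions agree under the conjugation hypothesis.
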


\begin{proof}
	By definition of coverage, if $\cov$ is a coverage on $S$, then $sXs^{-1}\in\cov_{E}(ses^{-1})$ for every $e\in E$, $s\in S$ and $X\in\cov_{E}(e)$.
	
	Now suppose we are given a coverage $\covd$ on $E$ satisfying the above property. Define a family $\til{\covd}=\{\til{\covd}(s)\}_{s\in S}$ by $\til{\covd}(s)=\{sX\mid X\in\covd(s^{-1}s)\}\cup\{Xs\mid X\in C(s^{-1}s)\}$ and notice that $\til{\covd}(e)=\covd(e)$ for all $e\in E$. Since, for $e\in E$ and $s\in S$, $e\leq s^{-1}s$ implies $se\leq s$, if $X\scj\lset{s^{-1}s}$, then $sX\scj\lset{s}$. Analogously, if $X\scj\lset{ss^{-1}}$, then $Xs\scj\lset{s}$.
	
	Given $s,t\in S$ and $Y\in\til{\covd}(s)$, we have to show that $tY\in\til{\covd}(ts)$ and $Yt\in\til{\covd}(st)$. First, we suppose that $Y=sX$ for some $X\in\covd(s^{-1}s)$. Since $s^{-1}t^{-1}ts\leq s^{-1}s$, we have that $s^{-1}t^{-1}tsX\in\covd(s^{-1}t^{-1}ts)$, so that $tY=tsX=tss^{-1}t^{-1}tsX\in\til{\covd}(ts)$. Also $t^{-1}Xt\in\covd{D}(t^{-1}s^{-1}st)$, by the condition on $\covd$, so that $Yt=sXt=sXtt^{-1}t=stt^{-1}Xt\in\til{\covd}(st)$, where we used that $X\scj E$ in the last equality. Analogously, if $Y=Xs$ for some $X\in\covd(ss^{-1})$, we show that $tY\in\til{\covd}(ts)$ and $Yt\in\til{\covd}(st)$.
	
	Clearly, if we start with a coverage $\covd$ on $E$ as in the statement, we build $\til{\covd}$ as above and we take the restriction to $E$, we get $\til{\covd}_E=\covd$, since $xe=x=ex$ for every $e\in E$ and $x\in X$ such that $X\in\covd(e)$.
	
	On the other hand, if we start with a coverage $\cov$ on $S$ and build $\til{\cov_E}$ as above, we get, by the definition of coverage, that $\til{\cov_E}(s)\scj\cov(s)$ for every $s\in S$. And for the other inclusion, let $Y\in\cov(s)$. Then, $s^{-1}Y\in\cov(s^{-1}s)=\cov_E(s^{-1}s)$ and $ss^{-1}Y\in\til{\cov_E}(s)$. Also, since $Y\scj\lset{s}$, we get that $ss^{-1}y=y$ for every $y\in Y$ so that $Y=ss^{-1}Y\in \til{\cov_E}(s)$, from where we conclude that $\cov(s)\scj \til{\cov_E}(s)$.
	
	Therefore the map $\covd\mapsto\til{\covd}$ as constructed above is the inverse of the map $\cov\mapsto\cov_{E}$.
\end{proof}

The next goal is to build a pseudogroup from a coverage on a inverse semigroup. For that, we first need a few definitions and results.

\begin{definition}
	Let $S$ be an inverse semigroup and $\cov$ a coverage on $S$. A semigroup homomorphism $\theta:S\to T$ to a pseudogroup $T$ is called a \emph{$\cov$-cover-to-join} map if for every $a\in S$ and $X\in\cov(a)$, we have that \(\theta(a)=\bigvee_{x\in X}\theta(x).\)
\end{definition}

One way of interpreting coverages is that we are imposing join relations on $S$. In other words, we want to find a pseudogroup $P_\cov(S)$ together with a $\cov$-cover-to-join map $\pi:S\to P_\cov(S)$ such that for every $\cov$-cover-to-join map $\theta:S\to T$, there exists a unique pseudogroup homomorphism $\til{\theta}$ such that $\theta=\til{\theta}\pi$. The existence of such pseudogroup in proven in \cite[Theorem 4.20]{MR3077869} for strong coverages that are \emph{idempotent-pure}, that is, $X\in\cov(a)$ and $X\in E(S)$ implies that $a\in E(S)$.

We adapt their proof to show the existence of $P_\cov(S)$ for our definition of coverage, however we have to weaken the universal property. More specifically, we look only at idempotent-pure semigroup homomorphisms between inverse semigroups (see Subsection \ref{subsec:inv.sem}).

In order to find $P_\cov(S)$, we first need the notion of a nucleus on an inverse semigroup and some of its properties.

\begin{definition}\cite[Subsection 4.3]{MR3077869}
	Let $S$ be an inverse semigroup. A map $\nu:S\to S$ is called a nucleus if it satisfies the following conditions:
	\begin{itemize}
		\item[(N1)] $a\leq \nu(a)$ for all $a\in S$.
		\item[(N2)] $a\leq b$ implies that $\nu(a)\leq\nu(b)$.
		\item[(N3)] $\nu^2(a)=\nu(a)$ for all $a\in S$.
		\item[(N4)] $\nu(a)\nu(b)\leq\nu(ab)$ for all $a,b\in S$.
	\end{itemize}
\end{definition}

Given a nucleus $\nu$ on $S$, we define $S_{\nu}=\{a\in S\mid \nu(a)=a\}$ and product on $S_{\nu}$ by $a\cdot b=\nu(ab)$.

\begin{proposition}\cite[Lemma 4.15 and Proposition 4.16]{MR3077869} \label{prop:nucleus}
	Let $\nu$ be a nucleus on a inverse semigroup $S$. Then $(S_\nu,\cdot)$ is an inverse semigroup whose natural partial order coincides with that of $S$ and the map $S\to S_\nu$ given by $a\mapsto\nu(a)$ is a surjective idempotent-pure semigroup homomorphism. Moreover, if $S$ is a pseudogroup then $S_\nu$ is also a pseudogroup and the natural map is a pseudogroup homomorphism.
\end{proposition}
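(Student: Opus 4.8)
The plan is to base everything on the identity
\[ \nu(\nu(a)b)=\nu(ab)=\nu(a\nu(b)) \qquad (a,b\in S), \]
which I would prove first. For the left equality, (N1) gives $ab\leq\nu(a)b$, so (N2) yields $\nu(ab)\leq\nu(\nu(a)b)$; conversely $\nu(a)b\leq\nu(a)\nu(b)\leq\nu(ab)$ by (N1) and (N4), and applying $\nu$ together with (N2) and (N3) gives $\nu(\nu(a)b)\leq\nu(ab)$. The right equality is symmetric. This identity immediately gives closure of $\cdot$ (by (N3)) and associativity, since both $(a\cdot b)\cdot c$ and $a\cdot(b\cdot c)$ collapse to $\nu(abc)$; it also shows $\nu(a)\cdot\nu(b)=\nu(\nu(a)\nu(b))=\nu(ab)$, so that $\nu\colon S\to(S_\nu,\cdot)$ is a surjective semigroup homomorphism. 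In particular $(S_\nu,\cdot)$ is isomorphic to the quotient $S/\ker\nu$, and since a homomorphic image of an inverse semigroup is again an inverse semigroup, $(S_\nu,\cdot)$ is an inverse semigroup. Being a homomorphism of inverse semigroups, $\nu$ preserves inverses, so the inverse of $a\in S_\nu$ is $\nu(a^{-1})$.

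Next I would show the two orders agree on $S_\nu$. One direction is automatic: a homomorphism between inverse semigroups is order preserving, so $a\leq b$ in $S$ forces $\nu(a)\leq\nu(b)$, i.e.\ $a\leq b$ in $S_\nu$, for $a,b\in S_\nu$. For the converse, if $a\leq b$ in $S_\nu$ then $a=(a\cdot \nu(a^{-1}))\cdot b$; using $a\cdot\nu(a^{-1})=\nu(a\nu(a^{-1}))=\nu(aa^{-1})$ and the key identity, this becomes $a=\nu(aa^{-1}b)$. Since $aa^{-1}b\leq b$ in $S$, monotonicity (N2) gives $a=\nu(aa^{-1}b)\leq\nu(b)=b$; this application of (N2) is exactly what makes the converse work.

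The delicate point is idempotent-purity of $\nu$, and the naive attempt fails here: deducing $s\in E(S)$ from $\nu(s^2)=\nu(s)$, or even from $s^2\leq s$, is impossible in general (e.g.\ a nonidempotent $s$ with $s^2=0$). Instead I would argue as follows. If $\nu(s)\in E(S_\nu)$, then in the inverse semigroup $S_\nu$ the idempotent $\nu(s)$ equals its own domain, so $\nu(s)=\nu(s)^{-1}\cdot\nu(s)=\nu(s^{-1})\cdot\nu(s)=\nu(s^{-1}s)$. By (N1) both $s$ and $s^{-1}s$ lie below $\nu(s)=\nu(s^{-1}s)$, and elements bounded above by a common element are compatible; but compatibility of $s$ with $s^{-1}s$ requires $s(s^{-1}s)^{-1}=ss^{-1}s=s$ to be idempotent, forcing $s\in E(S)$. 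This compatibility trick is the crux and the step I expect to be the main obstacle.

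Finally, for the pseudogroup case I would establish the formula $\bigvee^{S_\nu}A=\nu\!\left(\bigvee^{P}A\right)$. Given a compatible $A\subseteq S_\nu$, idempotent-purity shows $A$ is already compatible in $P$, since for $a,b\in A$ the elements $\nu(a^{-1}b),\nu(ab^{-1})\in E(S_\nu)$ force $a^{-1}b,ab^{-1}\in E(P)$; hence $w:=\bigvee^{P}A$ exists, and using (N1), (N2) and the order coincidence one checks that $\nu(w)$ is the least upper bound of $A$ in $S_\nu$. The auxiliary identity $\nu\!\left(\bigvee\nu(xa)\right)=\nu\!\left(\bigvee xa\right)$, obtained from (N1) and (N2), combined with infinite distributivity of $P$ and the key lemma, then gives infinite distributivity of $S_\nu$ and shows that $\nu$ preserves compatible joins; the unit and zero of $S_\nu$ are $\nu(1)$ and $\nu(0)$. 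Thus $S_\nu$ is a pseudogroup and $\nu$ a pseudogroup homomorphism.
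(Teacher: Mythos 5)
Your proposal is correct; note that the paper offers no proof of this proposition, quoting it directly from Lawson--Lenz, and your argument follows the standard route taken there (the identity $\nu(\nu(a)b)=\nu(ab)=\nu(a\nu(b))$ is exactly their key lemma on nuclei). The one step you flag as delicate -- deducing idempotent-purity from $\nu(s)=\nu(s^{-1}s)$ via the fact that $s$ and $s^{-1}s$ share the upper bound $\nu(s^{-1}s)$ and are therefore compatible, whence $s=s(s^{-1}s)^{-1}\in E(S)$ -- is sound, and correctly avoids the unjustified assumption that $\nu$ maps idempotents of $S$ to idempotents of $S$.
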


\begin{lemma}\label{lemma:nucleus.preserves.completely.prime}
	Let $\nu$ be a nucleus on a pseudogroup $P$. If $X$ is a completely prime filter in $P$, then $\nu(X)$ is a completely prime filter in $P_\nu$.
\end{lemma}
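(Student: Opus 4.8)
The plan is to verify directly that $\nu(X):=\{\nu(a)\mid a\in X\}$ satisfies each clause in the definition of a completely prime filter of the pseudogroup $P_\nu$. Throughout I would lean on Proposition \ref{prop:nucleus}, which gives that $\nu\colon P\to P_\nu$ is a surjective, idempotent-pure pseudogroup homomorphism and that the natural order of $P_\nu$ is the restriction of the order of $P$.

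The order-theoretic clauses are the easy part and use only (N1) and (N2). Nonemptiness is immediate from $X\neq\emptyset$. For upward closure, suppose $\nu(a)\in\nu(X)$ with $a\in X$ and take $t\in P_\nu$ with $\nu(a)\le t$; then $a\le\nu(a)\le t$ by (N1), so $t\in X$ because $X$ is upper closed, and since $t\in P_\nu$ we have $t=\nu(t)\in\nu(X)$. For downward directedness, given $\nu(a),\nu(b)\in\nu(X)$, choose $c\in X$ with $c\le a$ and $c\le b$; then $\nu(c)\in\nu(X)$ and, by (N2), $\nu(c)\le\nu(a)$ and $\nu(c)\le\nu(b)$. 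The same ``$t=\nu(t)$'' trick shows that any element of $P_\nu$ lying in $\nu(X)$ already lies in $X$, a fact I would reuse below.

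For complete primeness I would exploit that $\nu$ is a pseudogroup homomorphism, hence preserves joins of compatible sets. Given a non-empty $B\subseteq P_\nu$ that is compatible in $P_\nu$ with $\bigvee^{P_\nu}B\in\nu(X)$, the idea is to compute this join through $P$: since the inclusion $P_\nu\hookrightarrow P$ is right adjoint to $\nu$, one expresses $\bigvee^{P_\nu}B$ as $\nu$ of a join taken in $P$, transports the membership statement into $X$ via the remark at the end of the previous paragraph, applies complete primeness of $X$ in $P$ to produce a member of $B$ lying in $X$, and finally pushes it forward by $\nu$. The point that genuinely needs checking here is that compatibility in $P_\nu$ yields a compatible family in $P$ whose $P$-join exists and is carried by $\nu$ onto $\bigvee^{P_\nu}B$; because $\nu$ is inflationary, the direction in which the join is moved must be tracked carefully.

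The step I expect to be the main obstacle is properness, that is, showing $\nu(0)=0_{P_\nu}\notin\nu(X)$. Suppose $\nu(a)=\nu(0)$ for some $a\in X$. Since $\nu(0)$ is the zero, and hence an idempotent, of $P_\nu$, idempotent-purity of $\nu$ forces $a\in E(P)$, so $a$ is an idempotent of $X$ that $\nu$ collapses onto $\nu(0)$. The crux is then to rule this out, and this is where the real content of the lemma concentrates: one must use that $X$ is completely prime (and that $\nu$ does not identify a genuine member of $X$ with the bottom) to derive a contradiction, since none of the order-theoretic clauses constrain how $\nu$ behaves on the minimum. I would therefore spend the bulk of the argument on this properness step and treat the remaining clauses as the routine part.
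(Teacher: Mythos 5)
Your handling of the order-theoretic clauses is correct and coincides with the paper's: (N1) and upward closure give $\nu(X)\subseteq X$, hence $\nu(X)=X\cap P_\nu$, and (N2) gives directedness. But the two steps you yourself flag as delicate are genuine gaps, and your sketches do not close them. For complete primeness, the directional problem you mention is fatal: for a nonempty compatible $I\subseteq P_\nu$ the join in $P_\nu$ is $\nu(\bigvee_P I)$, which lies \emph{above} $\bigvee_P I$, so the hypothesis $\nu(\bigvee_P I)\in\nu(X)\subseteq X$ does not place $\bigvee_P I$ in $X$, and complete primeness of $X$ cannot be invoked. Concretely, take $P$ the frame of open subsets of $\mathbb{R}$, $\nu=\operatorname{int}\circ\operatorname{cl}$, and $X$ the set of opens containing $1$; then $I=\{(0,1),(1,2)\}$ satisfies $\bigvee_{P_\nu}I=(0,2)\in\nu(X)$ while $I\cap\nu(X)=\emptyset$. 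For properness, the contradiction you hope to extract from complete primeness cannot exist: the paper's definition quantifies only over \emph{nonempty} compatible subsets, so it imposes no constraint involving $0$, and $\nu(0)$ may genuinely lie in $X$ (take $\nu(U)=U\cup(0,1)$ on the opens of $\mathbb{R}$ and $X$ the opens containing $1/2$; then $\nu(X)=P_\nu$).

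For comparison, the paper's own proof passes over both points in a single clause each (``in particular, $\nu(X)$ is a proper subset of $P_\nu$'' and ``observing that $\bigvee I\in X$''), so you have located exactly where the argument is fragile; the conclusion to draw, however, is not that a cleverer use of complete primeness will finish your properness step, but that the statement requires an extra hypothesis on $X$. In the one place the lemma is applied (the end of the proof of Theorem \ref{thm:nucleus.pseudogroup.hom.grp}), $X$ additionally satisfies $\ran{X}\in\Image(\Phi)$, which does rule out $\nu(0)\in X$ (otherwise $0_{P_\nu}$ would lie in a filter of $P_\nu$) and supplies the leverage missing from the join-transport step; some assumption of this kind is what a complete proof actually needs.
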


\begin{proof}
	We start by observing that since $X$ is a filter, condition $(N1)$ implies that $\nu(X)\scj X$, and in particular, $\nu(X)$ is a proper subset of $P_\nu$.
	
	Suppose that $y\in P_\nu$ is such that $\nu(x)\leq y$ for some $x\in X$. Since $\nu(x)\in X$, we have that $y\in X$ so that $y=\nu(y)\in\nu(X)$.
	
	Given $\nu(x),\nu(y)\in\nu(X)$, where $x,y\in X$, there exists $z\in X$ such that $z\leq x$ and $z\leq y$. Then $\nu(z)\in\nu(X)$ and, by (N2), $\nu(z)\leq\nu(x)$ and $\nu(z)\leq \nu(y)$.
	
	Finally, let $I$ be a compatible set on $P_\nu$ such that $\bigvee I\in \nu(X)$. By Proposition \ref{prop:nucleus}, $I$ is a compatible set on $P$. Observing that $\bigvee I\in X$, we find $y\in I$ such that $y\in X$. It follows that $y=\nu(y)\in\nu(X)$.
\end{proof}

Now, let $\cov$ be a coverage on a inverse semigroup $S$. We will use $\cov$ to define a nucleus on the inverse semigroup $C(S)$ of all compatible order ideal of $S$ with operation given by subset multiplication \cite{MR0325820}.

A subset $A\scj S$ is said to be \emph{$\cov$-closed} if $X\scj A$ and $X\in\cov(a)$ implies that $a\in A$. We define a map $\nu:C(S)\to C(S)$ by $\nu(A)$ as the intersection of all $\cov$-closed compatible ordered ideals that contains $A$.

\begin{proposition}\label{prop:nucleus.from.cov}
	The map $\nu:C(S)\to C(S)$ define above is a nucleus.
\end{proposition}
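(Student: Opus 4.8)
The plan is to realise $\nu$ as the closure operator attached to the family $\mathcal{F}$ of all $\cov$-closed compatible order ideals of $S$, and then to read off conditions (N1)--(N4). The first structural observation I would record is that $\mathcal{F}$ is closed under arbitrary intersections: if each $B_i$ is a $\cov$-closed compatible order ideal, then $\bigcap_i B_i$ is again an order ideal, is again compatible (being contained in any single $B_i$), and is again $\cov$-closed (if $X\scj\bigcap_i B_i$ and $X\in\cov(a)$, then $X\scj B_i$ forces $a\in B_i$ for every $i$). Hence, for $A\in C(S)$, the set $\nu(A)$ is itself a member of $\mathcal{F}$ containing $A$, and by construction it is the smallest such. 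Conditions (N1), (N2), (N3) then follow formally from this minimality, using that the natural partial order on $C(S)$ is set inclusion: $A\scj\nu(A)$ is (N1); if $A\scj B$ then every member of $\mathcal{F}$ above $B$ is above $A$, so $\nu(A)\scj\nu(B)$, which is (N2); and $\nu(A)\in\mathcal{F}$ together with minimality gives $\nu(\nu(A))=\nu(A)$, which is (N3).

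The coverage axioms enter in (N4), where I would prove the sharper inclusion $\nu(A)\nu(B)\scj\nu(AB)$ (the stated inequality then follows since $\nu(AB)$ is already an order ideal and products in $C(S)$ are formed by down-closed setwise multiplication). Fix $b\in B$ and put $T_b=\{s\in S\mid sb\in\nu(AB)\}$. One checks that $T_b$ contains $A$ (as $ab\in AB\scj\nu(AB)$), is an order ideal, and is $\cov$-closed: if $X\in\cov(a)$ with $X\scj T_b$, then $Xb\in\cov(ab)$ by the defining axiom of a coverage and $Xb\scj\nu(AB)$, so $\cov$-closedness of $\nu(AB)$ yields $ab\in\nu(AB)$, i.e.\ $a\in T_b$. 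Since $T_b$ need not be compatible I cannot conclude $\nu(A)\scj T_b$ directly; instead I intersect with $\nu(A)$. The set $\nu(A)\cap T_b$ lies in $\mathcal{F}$ and contains $A$, so minimality of $\nu(A)$ forces $\nu(A)\scj T_b$. Thus $\nu(A)b\scj\nu(AB)$ for every $b\in B$, that is $\nu(A)B\scj\nu(AB)$. Iterating on the other side, with $s\in\nu(A)$ fixed and $T'_s=\{t\in S\mid st\in\nu(AB)\}$, now using the left-multiplication axiom $sX\in\cov(sc)$ and the inclusion just obtained to see $B\scj T'_s$, the same minimality trick gives $\nu(B)\scj T'_s$, hence $s\,\nu(B)\scj\nu(AB)$ for all $s\in\nu(A)$, which is the desired $\nu(A)\nu(B)\scj\nu(AB)$.

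The step I expect to require the most care is the well-definedness underlying the first paragraph, namely that $\nu(A)$ genuinely lands in $C(S)$. Closure under intersection is automatic, so the delicate point is that the defining family is nonempty, equivalently that every compatible order ideal is contained in at least one $\cov$-closed compatible order ideal; this is the unique place where $\cov$-closure and compatibility interact, since a naive transfinite $\cov$-closure of $A$ could in principle leave the compatible order ideals. I would attack this by analysing a single closure step: when $X\in\cov(a)$ with $X\scj A$ and $A$ compatible, every $x\in X$ satisfies $x\leq a$, so the members of $X$ are pairwise compatible, and for $b\in A$ the multiplication axiom gives $Xb^{-1}\in\cov(ab^{-1})$ with $Xb^{-1}\scj E(S)$; the task is to transport this into compatibility of $a$ against $b$ so that the closure can be carried out within the compatible order ideals. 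This compatibility bookkeeping, rather than the verification of (N1)--(N4) themselves, is where I expect the genuine difficulty of the proposition to lie.
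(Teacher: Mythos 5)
Your verification of (N1)--(N4) is sound and, modulo presentation, is the same argument the paper invokes by citation: the published proof is a one-line deferral to the first part of \cite[Theorem 4.20]{MR3077869}, and your translation sets $T_b$ and $T'_s$, intersected with $\nu(A)$ and $\nu(B)$ so as to stay among compatible order ideals, correctly reproduce the coverage-to-nucleus computation using only the translation axiom of Definition~\ref{def:coverage}.

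The genuine gap is the step you isolate at the end and do not carry out: that $\nu(A)$ lands in $C(S)$ at all, equivalently that every compatible order ideal is contained in some $\cov$-closed compatible order ideal, so that the defining intersection is not the empty intersection. This is precisely the content of \cite[Lemma 4.19]{MR3077869}, the one external ingredient the paper's proof rests on, and the route you sketch does not close it: from $X\in\cov(a)$, $X\scj A$ and $b\in A$ you get $Xb^{-1}\in\cov(ab^{-1})$ with $Xb^{-1}\scj AA^{-1}\scj E(S)$, i.e.\ $ab^{-1}$ is \emph{covered} by idempotents, and to conclude $ab^{-1}\in E(S)$ one needs the coverage to be idempotent-pure --- a hypothesis present in \cite[Theorem 4.20]{MR3077869} but absent from Definition~\ref{def:coverage} and from the statement of Proposition~\ref{prop:nucleus.from.cov}. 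Some such input really is needed: on the symmetric inverse monoid on two letters, setting $\cov(s)=\{\{0\}\}$ for every $s$ satisfies Definition~\ref{def:coverage}, yet the only $\cov$-closed order ideal containing the compatible order ideal $\{0\}$ is $S$ itself, which is not compatible. So to complete your proof you must either establish the analogue of \cite[Lemma 4.19]{MR3077869} under the stated hypotheses or make explicit the extra assumption (for instance idempotent-pureness of $\cov$) under which the well-definedness of $\nu$ holds; everything else in your write-up is in order.
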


\begin{proof}
	The proof is the same as in the first part of \cite[Theorem 4.20]{MR3077869} since \cite[Lemma 4.19]{MR3077869} still holds with our weaker definition of coverage.
\end{proof}

Define $P_{\cov}(S)$ to be $(C(S)_{\nu},\cdot)$, which is a pseudogroup by \cite[Theorem 1.15]{MR0325820} and Proposition \ref{prop:nucleus}. Also define the map $\pi:S\to C(S)_{\nu}$ by $\pi(a)=\nu(\lset{a})$.

\begin{remark}\label{rem:universal.pseudogroup}
	The set $P_{\cov}(S)$ is exactly the set of all $\cov$-closed compatible ordered ideals of $S$. Moreover, if $I\in P_{\cov}(S)$, then $\bigvee_{a\in I}\pi(a)=I$, since $I$ is the smallest $\cov$-closed compatible ordered ideals of $S$ that contains every element of $I$, and in particular, $I$ contains $\pi(a)$ for all $a\in I$.
\end{remark}

\begin{theorem}\label{thm:univesal.property}
	The map $\pi:S\to P_{\cov}(S)$ is a $\cov$-cover-to-join idempotent-pure map. Moreover for every $\cov$-cover-to-join idempotent-pure map $\theta:S\to T$, there exists a unique idempotent-pure pseudogroup homomorphism $\til{\theta}:P_{\cov}(S)\to T$ such that $\theta=\til{\theta}\pi$.
\end{theorem}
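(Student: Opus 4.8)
The plan is to establish the theorem in three stages: first verify that $\pi$ has the stated properties, then construct $\til{\theta}$, and finally prove its uniqueness. For the first stage, I would show $\pi$ is a semigroup homomorphism by checking that $\nu(\lset{a})\cdot\nu(\lset{b})=\nu(\lset{ab})$; this uses (N4) together with the fact that $\lset{a}\cdot\lset{b}=\lset{ab}$ in $C(S)$, combined with the definition of the product on $C(S)_\nu$. To see that $\pi$ is $\cov$-cover-to-join, suppose $X\in\cov(a)$; I must show $\nu(\lset{a})=\bigvee_{x\in X}\nu(\lset{x})$ in $P_\cov(S)$. Since the join in $P_\cov(S)$ is computed by applying $\nu$ to the union of order ideals, the key point is that any $\cov$-closed compatible order ideal containing all $\lset{x}$ for $x\in X$ must contain $a$ (because $X\in\cov(a)$ and $X$ is a subset of such an ideal), hence contains $\lset{a}$; this forces the join to equal $\pi(a)$. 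Idempotent-purity of $\pi$ is immediate from Proposition \ref{prop:nucleus}, since $\pi$ factors as $a\mapsto\lset{a}$ followed by the canonical nucleus map $C(S)\to C(S)_\nu$, and the first map is idempotent-pure into $C(S)$ while the second is idempotent-pure by Proposition \ref{prop:nucleus}.

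For the second stage, given a $\cov$-cover-to-join idempotent-pure map $\theta:S\to T$, I would define $\til{\theta}$ on $I\in P_\cov(S)$ by $\til{\theta}(I)=\bigvee_{a\in I}\theta(a)$. The first task is to check this join exists in $T$: since $I$ is a compatible order ideal and $\theta$ preserves the order and compatibility (homomorphisms between inverse semigroups preserve inverses, hence the partial order and the relations $x^{-1}y,xy^{-1}\in E$), the image $\theta(I)$ is a compatible subset of the pseudogroup $T$, so its join exists by the pseudogroup axiom. By Remark \ref{rem:universal.pseudogroup}, $I=\bigvee_{a\in I}\pi(a)$, which guarantees that $\theta=\til\theta\pi$ provided $\til\theta$ is well-behaved on the generators, namely $\til\theta(\pi(a))=\theta(a)$. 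That $\til\theta$ is a pseudogroup homomorphism requires checking it preserves products and joins of compatible sets; preservation of products will hinge on the fact that for $\cov$-closed ideals $I,J$ the join defining $\til\theta(I\cdot J)=\til\theta(\nu(IJ))$ agrees with $\bigl(\bigvee_{a\in I}\theta(a)\bigr)\bigl(\bigvee_{b\in J}\theta(b)\bigr)$, using infinite distributivity in $T$.

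I expect the main obstacle to be well-definedness of $\til\theta$, specifically showing that $\til\theta(\pi(a))=\theta(a)$ and that $\til\theta$ is insensitive to the nucleus closure. Concretely, I must verify that if $X\in\cov(a)$ and $X\scj I$ for a compatible order ideal $I$, then passing to $\nu$ does not enlarge the value of the join, i.e. $\bigvee_{x\in X}\theta(x)=\theta(a)$ forces $\bigvee_{c\in\nu(I)}\theta(c)=\bigvee_{a\in I}\theta(a)$. The cleanest route is to show that the set $J=\{s\in S\mid \theta(s)\leq\bigvee_{a\in I}\theta(a)\}$ is a $\cov$-closed compatible order ideal containing $I$: it is lower closed and compatible because $\theta$ respects the order and because the elements sit below a common join in a pseudogroup (so they are pairwise compatible), and it is $\cov$-closed precisely because $\theta$ is $\cov$-cover-to-join, as $X\in\cov(a)$ with $X\scj J$ gives $\theta(a)=\bigvee_{x\in X}\theta(x)\leq\bigvee_{a\in I}\theta(a)$. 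Since $\nu(I)$ is the smallest such ideal, $\nu(I)\scj J$, which yields the needed equality of joins; idempotent-purity of $\theta$ enters to ensure $J$ is genuinely an order ideal in the relevant sense and that $\til\theta$ is itself idempotent-pure. This is exactly where the weakening of the universal property to idempotent-pure maps is used, paralleling but simplifying the argument in \cite[Theorem 4.20]{MR3077869}.

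For the third stage, uniqueness follows formally: any pseudogroup homomorphism $\psi$ with $\psi\pi=\theta$ must satisfy $\psi(I)=\psi\bigl(\bigvee_{a\in I}\pi(a)\bigr)=\bigvee_{a\in I}\psi(\pi(a))=\bigvee_{a\in I}\theta(a)=\til\theta(I)$, using Remark \ref{rem:universal.pseudogroup} and the fact that $\psi$ preserves joins of compatible sets.
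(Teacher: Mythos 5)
Your proposal is correct and follows essentially the same route as the paper: the heart of both arguments is the auxiliary set $J=\{s\in S\mid\theta(s)\leq\bigvee_{a\in I}\theta(a)\}$, shown to be a $\cov$-closed compatible order ideal (with idempotent-purity of $\theta$ supplying compatibility) so that $\nu(I)\scj J$ and the nucleus does not change the value of the join. The only differences are presentational — you define $\til{\theta}$ directly on $P_{\cov}(S)$ where the paper first builds $\bar{\theta}$ on $C(S)$ via its universal property and then factors it through $\nu$, and you spell out the uniqueness argument that the paper leaves implicit.
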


\begin{proof}
	That $\pi$ is a semigroup homomorphism follows from \cite[Lemma 1.10]{MR0325820}. By \cite[Lemma  of Theorem 1.15]{MR0325820}, if $A\in E(C(S))$, then $A\scj E(S)$ so that $\pi$ is idempotent-pure. Now, let $a\in S$ and $X\in\cov(a)$. By (N2), we have that $\bigvee_{x\in X}\pi(x)\leq \pi(a)$. On the other hand $\bigvee_{x\in X}\pi(x)$ is a $\cov$-closed ordered ideal than contains $X$, so that $a\in \bigvee_{x\in X}\pi(x)$, and hence $\pi(a)\leq \bigvee_{x\in X}\pi(x)$.
	
	Using the universal property of $C(S)$ \cite[Theorem 1.15]{MR0325820}, the map $\bar{\theta}:C(S)\to T$ given by $\bar{\theta}(A)=\bigvee \theta(A)$ is the unique pseudogroup homomorphism such that $\bar{\theta}(\lset{s})=\theta(s)$. We show that $\bar{\theta}$ factors through $\nu$, and for that it is sufficient to prove that $\bar{\theta}(A)=\bar{\theta}(\nu(A))$ for $A\in C(S)$. By (N1), $\bar{\theta}(A)\leq \bar{\theta}(\nu(A))$. Now consider the set $I=\{a\in S\mid \theta(a)\leq\bar{\theta}(A)\}$, which is an ordered ideal containing $A$. Given $a,b\in I$, we have that $\theta(ab^{-1})\leq \bar{\theta}(AA^{-1})\in E(T)$. Since $\theta$ is idempotent-pure $ab^{-1}\in E(S)$ and, analogously, $a^{-1}b\in E(S)$. Hence $I$ is compatible. Suppose now that $X\in\cov(a)$ and $X\scj I$. Using that $\theta$ is a $\cov$-cover-to-join map, we see that $a\in I$. We conclude that $I$ is a $\cov$-closed compatible ordered ideal containing $A$ so that $\nu(A)\scj I$. By the definition of $I$, $\bar{\theta}(\nu(A))\leq \bar{\theta}(A)$. The map $\til{\theta}$ is then given by $\til{\theta}(\nu(A))=\bar{\theta}(A)$. Finally, if $\til{\theta}(\nu(A))\in E(T)$, then $a\in E(S)$ for all $a\in A$, since $\theta$ is idempotent-pure, and therefore $\nu(A)$ is an idempotent of $P_{\cov}(S)$.
\end{proof}

\begin{remark}
	The proof of Theorem \ref{thm:univesal.property} is adapted from Lawson and Lenz's proof of \cite[Theorem 4.20]{MR3077869}. We point out the main differences. The first is in the definition of the nucleus $\nu$. In \cite{MR3077869} they define a $\cov$-closure $\overline{A}$ of a set $A\scj S$ and show that it is the intersection of all $\cov$-closed ideals containing, however this proof heavily relies on properties (MS) and (T) of a strong coverage (see \cite[Lemma 4.18]{MR3077869} and the paragraph preceding it). They then define a nucleus on $C(S)$ by $\nu(I)=\overline{I}$. In a way, we have skipped the step of defining the $\cov$-closure. The second difference is that our universal property is weaker than theirs since we are only dealing with idempotent-pure semigroup homomorphisms.
\end{remark}

\begin{remark}
	In Theorem \ref{thm:univesal.property}, if $S$ consists only of idempotents, it can be seen as semilattice. In this case $P_\cov(S)$ is the frame define by Johnstone in \cite[Section II.2]{MR698074} from a coverage on a semilattice. In this case, we will denote $P_\cov(S)$ by $F_\cov(S)$.
\end{remark}

We end this section with two lemmas that will be needed throughout this paper. The first generalizes \cite[Lemma 2.2]{de_castro_2020}. The second can be thought as imposing relations on a pseudogroup given a map from an inverse semigroup and a coverage on it.

\begin{lemma}\label{lemma:semigroup.generates.pseudogroup}
	Let $P$ be a pseudogroup and let $S\scj P$ be a inverse subsemigroup. Suppose that $S$ generates $P$ in the sense that for all $p\in P$, there exists a compatible set $X\scj S$ such that $p=\bigvee X$. Define a coverage on $S$ by $X\in\cov'(a)$ for $a\in S$ if $X\scj S$ is a compatible set and $a=\bigvee X$ in $P$. Then $P\cong P_{\cov'}(S)$ as pseudogroups.
\end{lemma}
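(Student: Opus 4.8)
The plan is to use the universal property of Theorem~\ref{thm:univesal.property} to produce a pseudogroup homomorphism $P_{\cov'}(S)\to P$ and then exhibit an explicit two-sided inverse. First I would record that $\cov'$ really is a coverage: if $X\in\cov'(a)$ then $a=\bigvee X$ forces $x\le a$ for each $x\in X$, so $X\scj\lset{a}$, and for any $b\in S$ infinite distributivity of $P$ gives $\bigvee bX=ba$ and $\bigvee Xb=ab$, with $bX$ and $Xb$ compatible (a set whose join exists is compatible), so $bX\in\cov'(ba)$ and $Xb\in\cov'(ab)$. Next, the inclusion $\iota\colon S\hookrightarrow P$ is a $\cov'$-cover-to-join map essentially by definition of $\cov'$, since the requirement $\iota(a)=\bigvee_{x\in X}\iota(x)$ is literally $a=\bigvee X$; and $\iota$ is idempotent-pure because $S$ is a subsemigroup, so an element of $S$ is idempotent in $P$ exactly when it is idempotent in $S$. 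Applying Theorem~\ref{thm:univesal.property} to $\iota$ yields a unique idempotent-pure pseudogroup homomorphism $\Phi\colon P_{\cov'}(S)\to P$ with $\iota=\Phi\pi$, and tracing the construction in the proof of that theorem gives $\Phi(I)=\bigvee I$ (the join computed in $P$) for every $\cov'$-closed compatible order ideal $I$.

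To invert $\Phi$, I would define $\psi\colon P\to P_{\cov'}(S)$ by $\psi(p)=\{s\in S\mid s\le p\}$ and check that it lands in $P_{\cov'}(S)$. This set is a lower set of $S$; it is compatible because in an inverse semigroup any two elements below a common element are compatible; and it is $\cov'$-closed, for if $X\in\cov'(a)$ with $X\scj\psi(p)$, then $a=\bigvee X\le p$ since $p$ bounds $X$, whence $a\in\psi(p)$. Thus $\psi(p)\in P_{\cov'}(S)$.

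That $\Phi\psi=\mathrm{id}_P$ would follow from the generation hypothesis: clearly $\bigvee\psi(p)\le p$, while choosing a compatible $X\scj S$ with $p=\bigvee X$ gives $X\scj\psi(p)$ and hence $p\le\bigvee\psi(p)$. The key step, which I expect to be the main obstacle, is $\psi\Phi=\mathrm{id}$, i.e.\ that every $\cov'$-closed compatible order ideal $I$ satisfies $I=\{s\in S\mid s\le\bigvee I\}$. The inclusion $\scj$ is immediate. For the reverse, given $s\le\bigvee I$ I would use the order characterization and infinite distributivity to write $s=(\bigvee I)s^{-1}s=\bigvee\{is^{-1}s\mid i\in I\}$; each $is^{-1}s$ lies in $S$ and satisfies $is^{-1}s\le i$, hence lies in $I$, and the set $Y=\{is^{-1}s\mid i\in I\}$ is compatible with $\bigvee Y=s$, so $Y\in\cov'(s)$. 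Since $Y\scj I$ and $I$ is $\cov'$-closed, $s\in I$, which is exactly where the closedness of $I$ is consumed. These two identities show $\Phi$ is a bijection, and since a bijective pseudogroup homomorphism is automatically an isomorphism (its set-theoretic inverse is a semigroup homomorphism and preserves joins of compatible sets because $\Phi$ does and is injective), I would conclude $P\cong P_{\cov'}(S)$.
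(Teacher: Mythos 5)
Your proposal is correct and follows essentially the same route as the paper: both obtain the homomorphism $I\mapsto\bigvee I$ from Theorem~\ref{thm:univesal.property} applied to the inclusion, and both hinge on the identity $I=\{s\in S\mid s\le\bigvee I\}$, proved via infinite distributivity ($s=\bigvee_{i\in I}is^{-1}s$) together with $\cov'$-closedness of $I$. The only cosmetic difference is that you package surjectivity and injectivity as an explicit two-sided inverse $\psi(p)=\{s\in S\mid s\le p\}$, whereas the paper argues surjectivity from the generation hypothesis and injectivity separately.
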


\begin{proof}
	By Remark \ref{rem:universal.pseudogroup} and Theorem \ref{thm:univesal.property} (the inclusion is clearly idempotent-pure) the map $\phi:P_{\cov'}(S)\to P$ given by $\phi(I)=\bigvee I$ is a pseudogroup homomorphism that is surjetcive because $S$ generates $P$.
	
	We now prove that $\phi$ is injective. Let $I\in P_{\cov'}(S)$, $p=\bigvee I$ and $J=\{x\in S\mid x\leq p\}$. We prove that $I=J$ so that $I$ is uniquely determined by its image. Clearly $I\scj J$. Let $x\in S$ such that $x\leq p$, that is $x=px^{-1}x$. Since the product of $P$ distributes over joins, we have that $x=\bigvee_{a\in I}ax^{-1}x$. Note that $x\in S$ and $I\scj S$, so that $Ix^{-1}x$ is a covering of $x$, which implies that $x\in I$ because $I$ is $\cov'$-closed. Hence $I=J$ and $\phi$ is injective.	
\end{proof}

\begin{lemma}\label{lem:from.cov.inv.semigroup.to.cov.pseudogroup}
	Let $\cov$ be a coverage on an inverse semigroup $S$, $P$ be a pseudogroup and $\theta:S\to P$ an idempotent-pure semigroup homomorphism. For each $p\in P$, define $\covd(p)$ to be the set all compatible subsets $I$ of $P$ such that $\bigvee I = p$, all sets of the form $\theta(X)$, $q\theta(X)$, $\theta(X)r$ or $q\theta(X)r$ for $X\in\cov(a)$, whenever $p=q(a)$, $p=q\theta(a)$, $p=\theta(a)r$ or $p=q\theta(a)r$ respectively, where $a\in S$ and $q,r\in P$. Then $\covd$ is a coverage on $P$ and if $\pi:P\to P_{\covd}(P)$ is the map from the universal property for $\covd$, then $\pi\circ\theta:S\to P_{\covd}(P)$ is a $\cov$-cover-to-join map.
\end{lemma}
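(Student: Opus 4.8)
The plan is to verify the two assertions in turn. Before checking the coverage axioms it helps to observe that, since $P$ is a pseudogroup and hence a monoid with identity $1$, the three special forms $\theta(X)$, $q\theta(X)$ and $\theta(X)r$ are all instances of $q\theta(X)r$ with $q$ or $r$ taken to be $1$ (there is a small typo in the statement: the hypothesis ``$p=q(a)$'' attached to $\theta(X)$ should read $p=\theta(a)$). So I would record that $\covd(p)$ is built from only two kinds of coverings: the compatible sets $I\scj P$ with $\bigvee I=p$, and the sets $q\theta(X)r$ with $X\in\cov(a)$, $a\in S$, $q,r\in P$ and $p=q\theta(a)r$. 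By Definition \ref{def:coverage} there are then exactly two things to check, namely that every covering of $p$ is contained in $\lset{p}$, and that the family is stable under left and right multiplication by elements of $P$.

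For containment in $\lset{p}$: if $x\in I$ then $x\leq\bigvee I=p$; and if $x\in X$ then $x\leq a$ since $X\scj\lset{a}$, so $\theta(x)\leq\theta(a)$ because a semigroup homomorphism of inverse semigroups preserves inverses and hence the natural order, whence $q\theta(x)r\leq q\theta(a)r=p$ by compatibility of the order with multiplication \cite[Section 1.4]{MR1694900}. For stability, fix $s\in P$. For a covering $I$ of the first kind, infinite distributivity of $P$ gives $\bigvee sI=s\bigvee I=sp$ and $\bigvee Is=ps$; since these joins exist and $sI$, $Is$ are nonempty (the degenerate case $I=\emptyset$, $p=0$ being immediate from $s0=0$), the sets $sI$ and $Is$ are compatible and are therefore coverings of $sp$ and $ps$. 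For a covering $q\theta(X)r$ of $p=q\theta(a)r$ one simply reads off that $s\,q\theta(X)r=(sq)\theta(X)r$ is a covering of $(sq)\theta(a)r=sp$ and $q\theta(X)r\,s=q\theta(X)(rs)$ is a covering of $q\theta(a)(rs)=ps$, so this kind of covering is manifestly closed under multiplication. Hence $\covd$ is a coverage on $P$.

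For the second assertion, $\pi\circ\theta$ is a composite of semigroup homomorphisms and so is a semigroup homomorphism into the pseudogroup $P_{\covd}(P)$. Given $a\in S$ and $X\in\cov(a)$, the set $\theta(X)$ is by construction a covering in $\covd(\theta(a))$ (the first form, with $p=\theta(a)$). Since $\pi$ is a $\covd$-cover-to-join map by Theorem \ref{thm:univesal.property}, this yields $\pi(\theta(a))=\bigvee_{y\in\theta(X)}\pi(y)=\bigvee_{x\in X}\pi(\theta(x))$, which is exactly the $\cov$-cover-to-join identity for $\pi\circ\theta$. The existence of the join is already part of $\pi$ being cover-to-join; if one prefers an independent check, each $\pi(\theta(x))\leq\pi(\theta(a))$, and a set bounded above by a single element is compatible, so the join exists in $P_{\covd}(P)$.

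I expect the only real friction to be the bookkeeping of the several forms of coverings together with the verification that they are jointly closed under left and right multiplication, and the care needed to confirm that the joins in the cover-to-join conditions genuinely exist; both points are settled by infinite distributivity and by the standard fact that elements lying below a common bound are pairwise compatible. Once $\covd$ is known to be a coverage, the cover-to-join property of $\pi\circ\theta$ is essentially immediate from the definition of $\covd$ and the universal property of $\pi$.
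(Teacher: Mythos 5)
Your proposal is correct and follows essentially the same route as the paper: the paper declares the verification that $\covd$ is a coverage ``straightforward'' and only illustrates one case (left multiplication of a covering of the form $q\theta(X)$), while you carry out the full check, and your argument for the $\cov$-cover-to-join property of $\pi\circ\theta$ is identical to the paper's. Your additional observations --- that the typo $p=q(a)$ should read $p=\theta(a)$, that all special forms collapse to $q\theta(X)r$ since $P$ is a monoid, and that compatibility of $sI$ follows from the existence of $\bigvee sI$ --- are all sound and merely fill in details the paper omits.
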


\begin{proof}
	The proof that $\covd$ is a coverage is straightforward. For example, if $Y$ is a covering of $p$ the form $q\theta(X)$ where $X$ is a covering of $a$ and $p=q\theta(a)$, then for any $r\in P$, we have that $rY=rq\theta(X)$ is a covering of $rp$ since $rp=rq\theta(a)$.
	
	We now prove that $\pi\circ\theta$ is a $\cov$-cover-to-join map. For $a\in S$ and $X\in\cov(a)$, we have that $\theta(X)\in\covd(\theta(a))$. Since $\pi$ is a $\covd$-cover-to-join map, we get
	\[\pi(\theta(a))=\bigvee_{q\in\theta(X)}\pi(q)=\bigvee_{b\in X}\pi(\theta(b)).\]
\end{proof}

With the condition of the above lemma, we will say that $\covd$ is the \emph{coverage induced by $\theta$ and $\cov$}.


\section{Nuclei and étale groupoids associated to pseudogroups}\label{sec:nucleus}

Given a nucleus $\nu:P\to P$ on a pseudogroup $P$, we have seen in the last section that the fixed points of $\nu$ denoted by $P_{\nu}$ is a pseudogroup with multiplication given by $a\cdot b=\nu(ab)$ and that $\nu$ can be seen as pseudogroup homomorphism between $P$ and $(P_\nu,\cdot)$. The main goal of this section is to show that $\nu$ induces a natural map between to the étale groupoids associated to $P$ and $P_\nu$. In order to do that, we work with the notion of germs (in a abstract sense) as defined by Resende in \cite{resende2006lectures}.

\begin{definition}
	Let $S$ be an inverse semigroup and $F\scj E(S)$ a filter in $E(S)$. Let $s\in S$ be such that $ss^{-1}\in F$. The germ of $s$ at $F$ is the set
	\[\germ_F\, s=\{t\in S\mid tt^{-1}\in F\text{ and }ft=fs\text{ for some }f\in F\}.\]
\end{definition}

Part of the following lemma is an exercise in \cite{resende2006lectures}. Since the results will be needed here, we provide the proof.

\begin{lemma}\label{lemma:germs}
	Let $S$ be an inverse semigroup, $F\scj E(S)$ be a filter in $E(S)$ and $s\in S$ be such that $ss^{-1}\in F$. Then:
	\begin{enumerate}[(i)]
		\item\label{lemma:germs.fs} $fs\in \germ_F\, s$ for all $f\in F$;
		\item\label{lemma:germs.F} $F$ coincides with the set $\usetr{\{tt^{-1}\mid t\in\germ_F\, s\}}{E(S)}$;
		\item\label{lemma:germs.filter} $\germ_F\, s$ is a filter in $S$;
		\item\label{lemma:germs.independence} if $r\in\germ_F\, s$, then $\germ_F\, r=\germ_F\, s$;
		\item\label{lemma:germs.completely.prime} if $S$ is also a pseudogroup and $F$ is completely prime on $E(S)$, then $\germ_F\, s$ is completely prime on $S$.
	\end{enumerate}
\end{lemma}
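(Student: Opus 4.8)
The plan is to treat the five items in order, since each later part leans on the earlier ones, and to reduce everything to two elementary facts about $E(S)$: that idempotents commute, and that for $e,f\in E(S)$ the product $ef$ is their meet, so $ef\le e$ and $F$ is closed under such products. The single computational observation that drives most of the argument is that the defining relation ``$ft=fs$ for some $f\in F$'' is robust: post-multiplying by inverses gives $f\,tt^{-1}=f\,ss^{-1}$, and one may enlarge or shrink the witness $f$ within $F$ at will by multiplying by further elements of $F$.

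For part (i), given $f\in F$ I would compute $(fs)(fs)^{-1}=f\,ss^{-1}\in F$ and note that $f\cdot(fs)=fs=f\cdot s$, so $f$ itself witnesses membership. Part (ii) then splits into two inclusions: $\usetr{\{tt^{-1}\mid t\in\germ_F\, s\}}{E(S)}\scj F$ is immediate, since each $tt^{-1}\in F$ and $F$ is upper closed; for the reverse, given $e\in F$ I would use part (i) to put $es\in\germ_F\, s$ and observe $(es)(es)^{-1}=e\,ss^{-1}\le e$, so $e$ lies in the upper set. For part (iii) I would verify the four requirements of a filter separately. Non-emptiness holds because $s\in\germ_F\, s$. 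Properness holds because any $e\in E(S)\setminus F$, which exists since $F$ is proper in $E(S)$, satisfies $ee^{-1}=e\notin F$, hence $e\notin\germ_F\, s$. Upper closure follows by taking $t\le u$ with witness $f$ and checking that $g:=f\,tt^{-1}=f\,ss^{-1}\in F$ satisfies $gu=ft=fs=gs$, using $t=tt^{-1}u$. Downward directedness follows by taking $t,r$ with witnesses $f,g$, setting $h:=fg\in F$, and verifying $ht=hs=hr$, so that $c:=hs=ht=hr$ belongs to $\germ_F\, s$ by part (i) and lies below both $t$ and $r$ because it is obtained from each by left multiplication by the idempotent $h$.

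Part (iv) I would prove by first noting symmetry: if $r\in\germ_F\, s$ with witness $f$, then the same $f$ gives $s\in\germ_F\, r$. Then I would show one inclusion: if $t\in\germ_F\, s$ with witness $h$ and $r\in\germ_F\, s$ with witness $f$, the idempotent $k:=hf\in F$ satisfies $kt=hfs=kr$, so $t\in\germ_F\, r$. Applying this with the roles of $r$ and $s$ exchanged gives the opposite inclusion, and hence equality.

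The real work is in part (v), which I expect to be the main obstacle. Since part (iii) already makes $\germ_F\, s$ a filter, it remains to verify complete primeness: given a nonempty compatible $A\scj S$ with $t:=\bigvee A\in\germ_F\, s$, I must produce some $a_0\in A$ lying in $\germ_F\, s$. The crux is passing from the join in $S$ to a join of idempotents in the frame $E(S)$: one has $tt^{-1}=\bigvee_{a\in A}aa^{-1}$, a join of the pairwise-compatible idempotents $aa^{-1}$. Since $tt^{-1}\in F$ and $F$ is completely prime on $E(S)$, there is $a_0\in A$ with $a_0a_0^{-1}\in F$. To finish I would take the witness $h\in F$ for $t\in\germ_F\, s$, set $k:=h\,a_0a_0^{-1}\in F$, and use $a_0\le t$ in the form $a_0=a_0a_0^{-1}t$ together with $ht=hs$ and commutativity of idempotents to compute $ka_0=ha_0=a_0a_0^{-1}ht=a_0a_0^{-1}hs=ks$, whence $a_0\in\germ_F\, s$. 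The delicate point is the identity $tt^{-1}=\bigvee_{a\in A}aa^{-1}$, i.e.\ that the range map preserves compatible joins; this is a standard property of pseudogroups (see \cite{MR1694900,resende2006lectures}) that I would either cite directly or derive inline from infinite distributivity together with the fact that the inverse is an order automorphism, so that $t^{-1}=\bigvee_{a\in A}a^{-1}$.
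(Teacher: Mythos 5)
Your proof is correct, and parts (i)--(iv) follow essentially the same route as the paper's (including the use of the witness $ftt^{-1}=fss^{-1}$ for upper closure and of the product $efs$ from part (i) for directedness); the only nit there is that in the upper-closure step you should also record that $uu^{-1}\in F$, which is immediate from $tt^{-1}\leq uu^{-1}$. The genuine divergence is in part (v). The paper expands $tt^{-1}=\bigvee_{u,w\in A}uw^{-1}$, applies complete primeness of $F$ to extract a \emph{pair} $u,w$ with $uw^{-1}\in F$, and then runs a somewhat involved computation to show that the auxiliary element $uw^{-1}w\leq w$ lies in $\germ_F\,s$, concluding $w\in\germ_F\,s$ by upper closure. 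You instead collapse the double join to the diagonal, $tt^{-1}=\bigvee_{a\in A}aa^{-1}$, so that complete primeness hands you a single $a_0$ with $a_0a_0^{-1}\in F$, after which the witness computation $ka_0=ks$ with $k=ha_0a_0^{-1}$ is shorter and avoids the auxiliary element entirely. The price is the extra fact that the range map preserves compatible joins, equivalently that $ab^{-1}\leq aa^{-1}$ for compatible $a,b$; you correctly flag this as the delicate point, and it does follow in one line from $ab^{-1}=ab^{-1}(ab^{-1})^{-1}=a(b^{-1}b)a^{-1}\leq aa^{-1}$ (using that $ab^{-1}$ is idempotent), so it is not a gap, but you should either prove it inline or cite it rather than leave it as an alternative. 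Net effect: your version of (v) is marginally cleaner at the cost of one standard auxiliary lemma; the paper's version is self-contained at the cost of a longer computation.
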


\begin{proof}
	(\ref{lemma:germs.fs}) Since $F$ is a filter, we have that $F$ is closed under multiplication. Given $f\in F$, we have that $fs(fs)^{-1}=fss^{-1}f=fss^{-1}\in F$ and $ffs=fs$, so that $fs\in \germ_F\, s$.
	
	(\ref{lemma:germs.F}) Given $f\in F$, by (\ref{lemma:germs.fs}) $fs\in \germ_F\, s$, and since $f\geq fss^{-1}=fs(fs)^{-1}$, we have that $f\in \usetr{\{tt^{-1}\mid t\in\germ_F\, s\}}{E(S)}$. Now suppose that $f\in \usetr{\{tt^{-1}\mid t\in\germ_F\, s\}}{E(S)}$, then there exists $t\in \germ_F\, s$ such that $f\geq tt^{-1}$. Since $tt^{-1}\in F$ and $F$ is a filter, we have that $f\in F$.
	
	(\ref{lemma:germs.filter}) By (\ref{lemma:germs.F}), $\germ_F\, s$ is a proper subset of $S$, otherwise $F=E(S)$. Let $t\in \germ_F\, s$ and let $u\in S$ be such that $t\leq u$. Since $F$ is a filter, $tt^{-1}\in F$ and $tt^{-1}\leq uu^{-1}$, we have that $uu^{-1}\in F$. Also there exists $f\in F$ such that $fs=ft$, so that $ftt^{-1}\in F$ and $ftt^{-1}s=tt^{-1}fs=tt^{-1}ft=ftt^{-1}t=ft=ftt^{-1}u$. Hence, $u\in \germ_F\, s$.
	
	Now, fix $t,u\in\germ_F\, s$. There exists $e,f\in F$ such that $et=es$ and $fu=fs$. By (\ref{lemma:germs.fs}), $efs\in \germ_F\, s$. Also $efs=fes=fet\leq t$ and $efs=efu\leq u$. If follows that $\germ_F\, s$ is a filter.
	
	(\ref{lemma:germs.independence}) Suppose that $r\in \germ_F\, s$, then $rr^{-1}\in F$ and there exists $e\in F$ such that $er=es$. Now, if $t\in \germ_F\, s$, $tt^{-1}\in F$ and there exists $f\in F$ such that $fs=ft$. Since $F$ is a filter in $E(S)$, $ef\in F$ and $efr=fer=fes=efs=eft$. This implies that $t\in \germ_F\, r$. The other inclusion is analogous.
	
	(\ref{lemma:germs.completely.prime}) Let $A$ be a compatible subset of $S$ and suppose that $t:=\bigvee A\in \germ_F\, s$. Since joins distribute over multiplication, $tt^{-1}=\bigvee_{u,w\in A}uw^{-1}\in F$. Using that $F$ is completely prime and that $A$ is compatible, we have that there exist $u,w\in A$ such that $uw^{-1}\in F$. We claim that $w\in \germ_F\, s$, which proves that $\germ_F\, s$ is completely prime. Since $uw^{-1}$ is idempotent and $\germ_F\, s$ is a filter, it is sufficient to show that $uw^{-1}w\in \germ_F\, s$. First, $uw^{-1}w(uw^{-1}w)^{-1}=uw^{-1}wu^{-1}=uw^{-1}(uw^{-1})=uw^{-1}\in F$. Now, there exists $f\in F$ such that $ft=fs$. Also, $ww^{-1}t=w$ because $w\leq t$. Then $fuw^{-1}\in F$ and $fuw^{-1}uw^{-1}w=fuw^{-1}w=fuw^{-1}ww^{-1}t=fuw^{-1}t=uw^{-1}ft=uw^{-1}fs=fuw^{-1}s$. It follows that $uw^{-1}w\in\germ_F\, s$, completing the proof.
\end{proof}

\begin{remark}
	The previous Lemma shows that $\germ_F\, s$ is a filter in $S$. On the other hand, if $A$ is filter in $S$, if we define $F=\usetr{AA^{-1}}{E(S)}$ we have that $F$ is a filter in $E(S)$ and for any $a\in A$, we have that $\germ_F\,a=\usetr{Fa}{S}=A$ (see \cite[Subsection 3.3]{MR3109745}).
\end{remark}

Next lemma shows how we can use germs in order to work with filters and nuclei.

\begin{lemma}\label{lemma:filter.doesnt.depend.on.a}
	Let $\nu:S\to S$ be a nucleus on $S$, and $A$ be a filter in $S_{\nu}$. Define $F_A=\nu^{-1}((\usetr{A\cdot A^{-1}}{S_\nu})\cap E(S_{\nu}))$ and let $a\in A$ be given. Then:
	\begin{enumerate}[(i)]
		\item\label{lemma:filter.doesnt.depend.on.a.F_A} $F_A$ is a filter in $E(S)$;
		\item\label{lemma:filter.doesnt.depend.on.a.germ} $\germ_{F_A}\, a$ is a filter that does not depend on $a$;
		\item\label{lemma:filter.doesnt.depend.on.a.pertinence} for all $x\in S$, we have that $x\in\germ_{F_A}\, a$ if and only if $\nu(x)\in\germ_{F_A}\, a$;
		\item\label{lemma:germ.equal.preimage} $\germ_{F_A}\, a=\nu^{-1}(A)$;
		\item\label{lemma:filter.doesnt.depend.on.a.pseudogroup} if $S$ is a pseudogroup and $A$ is completely prime, then $\germ_{F_A}\, a$ is completely prime.
		
	\end{enumerate}
\end{lemma}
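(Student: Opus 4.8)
The plan is to prove part (\ref{lemma:germ.equal.preimage}) first, since the other four parts follow from it with little extra work. Throughout I will use that, by Proposition \ref{prop:nucleus}, the map $\nu\colon S\to S_\nu$ is a surjective idempotent-pure semigroup homomorphism whose induced order agrees with that of $S$; as a homomorphism of inverse semigroups it preserves inverses, so $\nu(s^{-1})$ is the $S_\nu$-inverse of $\nu(s)$ and $\nu(xx^{-1})=\nu(x)\cdot\nu(x)^{-1}$ is an idempotent of $S_\nu$. I will also record that $\nu(E(S))=E(S_\nu)$: the inclusion $\scj$ is the homomorphism property, and $\supseteq$ follows from surjectivity together with idempotent-purity. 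Writing $B=\ran{A}\cap E(S_\nu)=\usetr{A\cdot A^{-1}}{S_\nu}\cap E(S_\nu)$, which is a filter in the semilattice $E(S_\nu)$ (the range of a filter is a filter, and intersecting a unit filter with the idempotents yields a filter in $E(S_\nu)$), we have $F_A=\nu^{-1}(B)$. Part (\ref{lemma:filter.doesnt.depend.on.a.F_A}) is then mostly bookkeeping: $F_A\scj E(S)$ because $\nu(e)\in B\scj E(S_\nu)$ forces $e\in E(S)$ by idempotent-purity, upper closure and closure under products transfer from $B$ to $F_A$ via (N2) and $\nu(ef)=\nu(e)\cdot\nu(f)$, and non-emptiness is clear. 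The only delicate point is properness: if $F_A=E(S)$ then $\nu(E(S))\scj B$, and since $\nu(E(S))=E(S_\nu)$ this would force $B=E(S_\nu)$, contradicting that $B$ is a proper filter.

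The heart of the matter is part (\ref{lemma:germ.equal.preimage}), $\germ_{F_A} a=\nu^{-1}(A)$, which I would prove by two inclusions. For $\supseteq$, take $x$ with $\nu(x)\in A$. Then $\nu(xx^{-1})=\nu(x)\cdot\nu(x)^{-1}\in A\cdot A^{-1}\scj\ran{A}$ is idempotent in $S_\nu$, so $xx^{-1}\in F_A$. Since $A$ is downward directed and the orders agree, choose $c\in A$ with $c\leq\nu(x)$ and $c\leq a$, and set $f=xx^{-1}cc^{-1}$, which lies in $F_A$ as a product of two elements of $F_A$. Using $x=xx^{-1}\nu(x)$ and $c=cc^{-1}\nu(x)$ together with $f\leq xx^{-1},cc^{-1}$ gives $fx=f\nu(x)=fc$, while $c=cc^{-1}a$ gives $fc=fa$; hence $fx=fa$ and $x\in\germ_{F_A} a$. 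For $\subseteq$, take $x\in\germ_{F_A} a$, so $fx=fa$ for some $f\in F_A$. Applying $\nu$ yields $\nu(f)\cdot\nu(x)=\nu(f)\cdot a$ in $S_\nu$, where $\nu(f)\in B\scj\ran{A}$ is idempotent. The groupoid identity $\ran{A}*A=A$ in $\grl(S_\nu)$ gives $(\ran{A})\cdot A\scj A$, whence $\nu(f)\cdot a\in A$; as $\nu(f)\cdot\nu(x)\leq\nu(x)$ and $A$ is upper closed, $\nu(x)\in A$, i.e. $x\in\nu^{-1}(A)$.

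The remaining parts are then short. Since $\nu^{-1}(A)$ manifestly does not depend on $a$, part (\ref{lemma:germ.equal.preimage}) already yields the independence asserted in (\ref{lemma:filter.doesnt.depend.on.a.germ}); that $\germ_{F_A} a$ is a filter follows from Lemma \ref{lemma:germs}(\ref{lemma:germs.filter}) once I check $aa^{-1}\in F_A$, which is the special case $x=a$ of the computation above. Part (\ref{lemma:filter.doesnt.depend.on.a.pertinence}) is immediate from (\ref{lemma:germ.equal.preimage}), because $\nu^{-1}(A)$ is $\nu$-saturated: $x\in\nu^{-1}(A)\Leftrightarrow\nu(x)\in A\Leftrightarrow\nu(\nu(x))\in A\Leftrightarrow\nu(x)\in\nu^{-1}(A)$. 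For part (\ref{lemma:filter.doesnt.depend.on.a.pseudogroup}) I would use that, when $S$ is a pseudogroup, $\nu$ is a pseudogroup homomorphism (Proposition \ref{prop:nucleus}): given a compatible $I\scj S$ with $\bigvee I\in\nu^{-1}(A)$, the image $\nu(I)$ is compatible in $S_\nu$ and $\nu(\bigvee I)=\bigvee\nu(I)\in A$, so complete primeness of $A$ produces $i\in I$ with $\nu(i)\in A$, that is $i\in\nu^{-1}(A)$.

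I expect the main obstacle to be the $\supseteq$ inclusion in part (\ref{lemma:germ.equal.preimage}): one must manufacture a single idempotent $f\in F_A$ witnessing $fx=fa$, and the only tool linking $x$ to the element $\nu(x)\in A$ is (N1), so the argument has to route through $\nu(x)$ and the directedness of $A$ inside $S_\nu$. Keeping the $S$-inverse and the $S_\nu$-inverse distinct throughout (the latter being $\nu(\,\cdot^{\,-1})$) is the other place where care will be needed.
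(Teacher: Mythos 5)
Your proposal is correct, but it inverts the paper's logical architecture. The paper establishes (\ref{lemma:filter.doesnt.depend.on.a.germ}) and (\ref{lemma:filter.doesnt.depend.on.a.pertinence}) first by direct germ computations --- independence of $a$ is shown by taking $t\in A$ with $t\leq a,b$ and manipulating witnesses, and (\ref{lemma:filter.doesnt.depend.on.a.pertinence}) by hand from $x\leq\nu(x)$ --- and only then deduces $\germ_{F_A}a=\nu^{-1}(A)$, using the already-proved inclusion $A\scj\germ_{F_A}a$ together with (\ref{lemma:filter.doesnt.depend.on.a.pertinence}) for the containment $\nu^{-1}(A)\scj\germ_{F_A}a$; for (\ref{lemma:filter.doesnt.depend.on.a.pseudogroup}) it shows $F_A$ is completely prime (via \cite[Lemma 2.2]{MR3077869} and the fact that $\nu$ restricts to a frame homomorphism on idempotents) and invokes Lemma \ref{lemma:germs}(\ref{lemma:germs.completely.prime}). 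You instead make (\ref{lemma:germ.equal.preimage}) the engine: your $\supseteq$ inclusion manufactures the witness $f=xx^{-1}cc^{-1}$ from directedness of $A$ at $\nu(x)$ and $a$, which is a genuinely different (and self-contained) argument replacing the paper's route through parts (\ref{lemma:filter.doesnt.depend.on.a.germ}) and (\ref{lemma:filter.doesnt.depend.on.a.pertinence}); your $\scj$ inclusion is essentially the paper's, both resting on $\ran{A}*A=A$ in $\grl(S_\nu)$. What your approach buys is economy downstream: (\ref{lemma:filter.doesnt.depend.on.a.germ}) independence and (\ref{lemma:filter.doesnt.depend.on.a.pertinence}) become one-line corollaries of $\nu$-saturation of $\nu^{-1}(A)$, and your proof of (\ref{lemma:filter.doesnt.depend.on.a.pseudogroup}) --- pushing $\bigvee I$ through the pseudogroup homomorphism $\nu$ and citing complete primeness of $A$ in $S_\nu$ --- bypasses both the complete primeness of $F_A$ and Lemma \ref{lemma:germs}(\ref{lemma:germs.completely.prime}) altogether. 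The paper's order of proof, by contrast, keeps the germ description $\germ_{F_A}a$ as the primary object (which is what Theorem \ref{thm:nucleus.pseudogroup.hom.grp} later computes with) and gets the filter property of $\germ_{F_A}a$ for free from Lemma \ref{lemma:germs}(\ref{lemma:germs.filter}) before the identification with $\nu^{-1}(A)$ is available. Both are sound; just make sure, as you note, that $aa^{-1}\in F_A$ is recorded before $\germ_{F_A}a$ is ever written down, and that every inverse taken inside $S_\nu$ is $\nu(\,\cdot^{-1})$.
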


\begin{proof}
	\eqref{lemma:filter.doesnt.depend.on.a.F_A} That $(\usetr{A\cdot A^{-1}}{S_\nu})\cap E(S_{\nu})$ is a filter in $E(S_{\nu})$ follows from \cite[Lemma 3.5]{MR3109745}. Since $\nu$ is idempotent-pure, the meets on $E(S)$ and $E(S_{\nu})$ are given by multiplication and $\nu$ preserves multiplication when thought as map from $S$ to $S_{\nu}$, we have that $F_A$ is a filter in $E(S)$.
	
	\eqref{lemma:filter.doesnt.depend.on.a.germ} By Lemma \ref{lemma:germs}(\ref{lemma:germs.filter}), $\germ_{F_A}\, a$ is a filter in $S$.
	
	Given $b\in A$, we show that $\germ_{F_A}\, a=\germ_{F_A}\, b$. Since $A$ is a filter, there exists $t\in A$ such that $t\leq a$ and $t\leq b$, or equivalently, $t=tt^{-1}a=tt^{-1}b$ (the order on $(S_{\nu},\cdot)$ coincides with the one on $S$). Given $s\in\germ_{F_A}\, a$, we have that $ss^{-1}\in F_A$ and there exists $f\in F_A$ such that $fs=fa$. Notice that $tt^{-1}\in F_A$, since $\nu$ is the identity map on $S_{\nu}$ and $\nu(tt^{-1})=t\cdot t^{-1}$. Since $F_A$ is a filter in $E(S)$, we have that $tt^{-1}f\in F_A$. Also,
	\[tt^{-1}fs=tt^{-1}fa=ftt^{-1}a=ftt^{-1}b=tt^{-1}fb,\]
	which implies that $s\in\germ_{F_A}\, b$. This proves the inclusion $\germ_{F_A}\, a\scj\germ_{F_A}\, b$. The reverse inclusion is analogous and hence $\germ_{F_A}\, a=\germ_{F_A}\, b$.
	
	\eqref{lemma:filter.doesnt.depend.on.a.pertinence} For $x\in S$, if $x\in\germ_{F_A}\, a$, since $x\leq\nu(x)$ and $\germ_{F_A}\, a$ is a filter, we have that $\nu(x)\in\germ_{F_A}\, a$. On the other hand, if $\nu(x)\in\germ_{F_A}\, a$, then $\nu(xx^{-1})=\nu(x)\cdot\nu(x)^{-1}=\nu(\nu(x)\nu(x)^{-1})\in (\usetr{A\cdot A^{-1}}{S_\nu})\cap E(S_{\nu})$, so that $xx^{-1}\in F_A$. Since $x\leq \nu(x)$, we have that $x=xx^{-1}x\leq xx^{-1}\nu(x)\in \germ_{F_A}\, a$.
	
	\eqref{lemma:germ.equal.preimage} Given $x\in S$, suppose first that $x\in\germ_{F_A}\, a$. In this case, there exists $e\in F_A$ such that $ex=ea$. It follows that $\nu(ex)=\nu(ea)=\nu(e)\cdot a\in \usetr{A\cdot A^{-1}\cdot A}{S_\nu}=A*A^{-1}*A=A$ (see Subsection \ref{subsec:groupoid}). Since $\nu$ preservers the order, $ex\leq x$ and $A$ is a filter, we have that $x\in\nu^{-1}(A)$. Now suppose that $x\in\nu^{-1}(A)$. Then $\nu(x)\in A\scj \germ_{F_A}\, a$, where the inclusion comes from \eqref{lemma:filter.doesnt.depend.on.a.germ}. By \eqref{lemma:filter.doesnt.depend.on.a.pertinence}, $x\in  \germ_{F_A}\, a$.
	
	\eqref{lemma:filter.doesnt.depend.on.a.pseudogroup} Suppose that $S$ is a pseudogroup. From \cite[Lemma 2.2]{MR3077869}, we have that $\usetr{A\cdot A^{-1}}{S_\nu}$ is completely prime. Since $\nu$ is a pseudogroup homomorphism, when restricted to the idempotents, it is a frame homomorphism, so that $F_A$ is completely prime. Then $\germ_{F_A}\, a$ is completely prime by Lemma \ref{lemma:germs}(\ref{lemma:germs.completely.prime}).
\end{proof}

For the next theorem we will use the notations introduced in subsection \ref{subsec:groupoid}. In particular, we will use the same notation for the product on both $\gr{P_{\nu}}$ and $\gr{P}$, the context making it clear which of the groupoid we are dealing with.

\begin{theorem}\label{thm:nucleus.pseudogroup.hom.grp}
	Let $\nu:P\to P$ be a nucleus on a pseudogroup $P$. The map $\Phi:\gr{P_{\nu}}\to\gr{P}$ given by $\Phi(A)=\nu^{-1}(A)$ is an embedding of topological groupoids. Moreover, for $X\in\gr{P}$, we have that $X\in\Image(\Phi)$ if and only if $\ran{X}\in\Image(\Phi)$.
\end{theorem}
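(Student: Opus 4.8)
The plan is to prove that $\Phi$ is a well-defined injective groupoid homomorphism, that it is continuous and open onto its image, and finally to establish the range condition. Let me think through each piece.

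First, well-definedness. Given a completely prime filter $A$ in $P_\nu$, I need $\nu^{-1}(A)$ to be a completely prime filter in $P$. This is precisely what Lemma~\ref{lemma:filter.doesnt.depend.on.a} delivers: by part (iv), $\nu^{-1}(A) = \germ_{F_A}\, a$ for any $a \in A$, and by part (v), since $A$ is completely prime and $S=P$ is a pseudogroup, this germ is completely prime. So $\Phi(A) \in \gr{P}$.

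Injectivity. If $\nu^{-1}(A) = \nu^{-1}(B)$, I want $A = B$. Since $\nu$ restricted to $P_\nu$ is the identity and $A \subseteq P_\nu$, I have $A = \nu(A) = \nu(\nu^{-1}(A))$... let me be careful. For $a \in A$, $\nu(a) = a$ so $a \in \nu^{-1}(A)$, giving $A \subseteq \nu^{-1}(A)$. Conversely, applying $\nu$ to $\nu^{-1}(A)$ recovers $A$ because $A = \nu(\nu^{-1}(A)) \cap P_\nu$ and elements of $A$ are fixed. So $A$ is determined by $\nu^{-1}(A)$, hence $\Phi$ is injective.

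The groupoid homomorphism property. I need to check $\Phi$ respects domains, ranges, and the partially-defined product. Using Lemma~\ref{lemma:filter.doesnt.depend.on.a}, the key computation is that $\nu^{-1}$ commutes appropriately with the groupoid operations $\dom{\cdot}$, $\ran{\cdot}$, inverse, and $*$. Since the product in $\grl$ is $A*B = \uset{AB}$ and $\nu$ is a pseudogroup homomorphism preserving multiplication (Proposition~\ref{prop:nucleus}), I expect $\nu^{-1}(A*B) = \nu^{-1}(A)*\nu^{-1}(B)$ when the products are defined, and likewise $\Phi(A^{-1}) = \Phi(A)^{-1}$ — these follow from tracking germs as in Lemma~\ref{lemma:filter.doesnt.depend.on.a}\eqref{lemma:germ.equal.preimage}. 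Compatibility of the domain/range condition $\dom{A}=\ran{B}$ transferring across $\Phi$ uses that $\nu$ restricted to idempotents is a frame homomorphism.

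Topological embedding. The topology on $\gr{P}$ has basis $\{V_s\}_{s\in P}$ with $V_s = \{X \mid s \in X\}$. I compute $\Phi^{-1}(V_s) = \{A \mid s \in \nu^{-1}(A)\} = \{A \mid \nu(s) \in A\} = V_{\nu(s)} \cap \gr{P_\nu}$, which is open; this gives continuity. For openness onto the image, I show $\Phi(V_t \cap \gr{P_\nu}) = V_t \cap \Image(\Phi)$ for $t \in P_\nu$, using that $t \in \nu^{-1}(A)$ iff $t = \nu(t) \in A$. Combined with injectivity, $\Phi$ is a homeomorphism onto its image.

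The hard part, and the genuinely new content, is the range condition: $X \in \Image(\Phi)$ iff $\ran{X} \in \Image(\Phi)$. The forward direction is formal since $\Phi$ is a homomorphism and $\Image(\Phi)$ should be closed under $\ran{\cdot}$. For the converse, suppose $\ran{X} = \Phi(U)$ for some unit $U \in \gr{P_\nu}$; I must produce $A \in \gr{P_\nu}$ with $\Phi(A) = X$. The natural candidate is $A = \nu(X)$, and Lemma~\ref{lemma:nucleus.preserves.completely.prime} guarantees $\nu(X)$ is a completely prime filter in $P_\nu$. The crux is showing $\nu^{-1}(\nu(X)) = X$, i.e. that $X$ is already "saturated" — this is where the hypothesis on the range is used, reconstructing membership in $X$ from the range filter $\ran{X}$ via germs as in the identity $\germ_F\, s = \uset{Fs}$ from the remark following Lemma~\ref{lemma:germs}. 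I anticipate the main obstacle is proving $\nu^{-1}(\nu(X)) \subseteq X$, which can fail for general $X$ but should hold precisely when $\ran{X} \in \Image(\Phi)$, so the argument must carefully leverage that range hypothesis rather than appeal to a generic saturation.
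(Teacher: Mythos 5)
Your overall architecture matches the paper's proof exactly (same candidate $\Phi$, same use of Lemma~\ref{lemma:filter.doesnt.depend.on.a} for well-definedness, same injectivity argument $A=\Phi(A)\cap P_\nu$, same basic-open-set computations for the topology, same candidate $\nu(X)$ via Lemma~\ref{lemma:nucleus.preserves.completely.prime} for the range condition), but two of the three substantive steps are only announced, not proved, and neither is formal.

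First, the homomorphism property. You write that you ``expect'' $\nu^{-1}(A*B)=\nu^{-1}(A)*\nu^{-1}(B)$ and that the transfer of the composability condition $\dom{A}=\ran{B}\Rightarrow\dom{\Phi(A)}=\ran{\Phi(B)}$ ``uses that $\nu$ restricted to idempotents is a frame homomorphism.'' Neither follows by generalities about $\nu^{-1}$. The obstruction is that (N1) gives $z\leq\nu(z)$, which points the wrong way: from $\nu(z)\in A*B$, i.e.\ $\nu(z)\geq a\cdot b=\nu(ab)$ for some $a\in A$, $b\in B$, you cannot conclude $z\geq ab$ or $z\in\usetr{\nu^{-1}(A)\nu^{-1}(B)}{P}$. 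One genuinely has to produce, for each $z\in\Phi(A*B)$, an element of $\Phi(A)\Phi(B)$ lying \emph{below} $z$; the paper does this by extracting $g\in F_{A*B}$ with $gz=gab$ and exhibiting $y=a^{-1}gab\in\Phi(B)$ with $ay=gz\leq z$, after verifying $\nu(a^{-1}ga)\in B*B^{-1}$. The domain/range transfer likewise requires constructing explicit idempotents such as $bb^{-1}a^{-1}e_xe_ya$ and checking they land in $F_B$. This is the bulk of the paper's proof and is entirely missing from yours.

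Second, the converse of the range condition. You correctly isolate the crux --- showing $\nu^{-1}(\nu(X))\subseteq X$ using the hypothesis $\ran{X}\in\Image(\Phi)$ --- but then stop, stating only that ``the argument must carefully leverage that range hypothesis.'' That is a diagnosis, not a proof. The missing argument: for $b\in\germ_{F_{\nu(X)}}a$ pick $e\in F_{\nu(X)}$ with $eb=ea$; unwinding $F_{\nu(X)}$ gives compatible $r,s\in X$ with $\nu(e)\geq\nu(rs^{-1})\in X*X^{-1}=\Phi(B)$, hence $\nu(e)\in\Phi(B)$ and so $e\in\Phi(B)=X*X^{-1}$ (here the already-established fact that membership in $\Phi(B)$ is detected by $\nu$ is used); then $eb=ea\in X*X^{-1}*X=X$ and $eb\leq b$ forces $b\in X$. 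Without this chain the theorem's second assertion is unproven.
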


\begin{proof}
	By Lemma \ref{lemma:filter.doesnt.depend.on.a}, the map $\Phi$ is well defined.
	
	The following observations are used throughout the proof. Since $\nu$ is idempotent-pure, for every $A\in\gr{P_\nu}$, the elements of $F_A$ are idempotents on $P$. By Lemma \ref{lemma:filter.doesnt.depend.on.a}, for $x\in P$, $x\in\Phi(A)$ if and only if $\nu(x)\in\Phi(A)$. Also, for $A\in\gr{P_\nu}$, we define $F_A=\nu^{-1}((\usetr{A\cdot A^{-1}}{P_\nu})\cap E(P_{\nu}))$, so that $\Phi(A)=\germ_{F_A}\,a$ for any $a\in A$.
	
	Notice that for $A\in \gr{P_\nu}$, we have that $A=\Phi(A)\cap P_\nu$. The injectivity of $\Phi$ follows immediately.
	
	For the multiplicativity of $\Phi$, we first show that, given $A,B\in\gr{P_{\nu}}$, we have that $\dom{A}=\ran{B}$ implies that $\dom{\Phi(A)}=\ran{\Phi(B)}$. Suppose that, $s\in\dom{\Phi(A)}$ so that there exist $x,y\in\Phi(A)$ such that $x^{-1}y\leq s$. In order to work with $\Phi(A)$ and $\Phi(B)$ we fix arbitrary elements $a\in A$ and $b\in B$. Since $\Phi(A)=\germ_{F_A}\,a$, there exist $e_x,e_y\in F_A$ such that $e_xa=e_xx$ and $e_ya=e_yy$. We claim that $z:=bb^{-1}a^{-1}e_xe_ya\in F_B$. Clearly $z$ is an idempotent so that $\nu(z)\in E(P_{\nu})$. And, since $e_x,e_y\in F_A$, we have that $\nu(e_x),\nu(e_y)\in\usetr{A\cdot A^{-1}}{P_{\nu}}=A*A^{-1}$, so that
	\[\nu(z)=\nu(bb^{-1}a^{-1}e_xe_ya)=b\cdot b^{-1}\cdot a^{-1}\cdot\nu(e_x)\cdot\nu(e_y)\cdot a\in B*B^{-1}*A^{-1}*A*A^{-1}*A*A^{-1}*A\]
	\[=B*B^{-1}*A^{-1}*A=B*B^{-1}*B*B^{-1}=B*B^{-1}=\usetr{B\cdot B^{-1}}{P_\nu}.\]
	By Lemma \ref{lemma:germs}(\ref{lemma:germs.fs}), $zb\in \germ_{F_B}\,b=\Phi(B)$. Now,
	\[zb(zb)^{-1}=zbb^{-1}z=bb^{-1}a^{-1}e_xe_yabb^{-1}bb^{-1}a^{-1}e_xe_ya=bb^{-1}a^{-1}e_xe_ya\]
	\[=bb^{-1}x^{-1}e_xe_yy\leq x^{-1}e_xe_yy\leq x^{-1}y\leq s,\]
	which implies that $s\in\ran{\Phi(B)}$. For the other inclusion, take $r\in \ran{\Phi(B)}$, $u,v\in\Phi(B)$ such that $uv^{-1}\leq r$ and $f_u,f_v\in F_B$ such that $f_ub=f_uu$ and $f_vb=f_vv$. A similar argument shows that $w:=af_ubb^{-1}f_va^{-1}a\in\Phi(A)$ and that $w^{-1}w\leq r$, so that $r\in\dom{\Phi(A)}$.
	
	We now show that if $A,B\in\gr{P_{\nu}}$ are such that $A^{-1}*A=\dom{A}=\ran{B}=B*B^{-1}$, then $\Phi(A)*\Phi(B)=\Phi(A*B)$. In order to compute $\Phi(A*B)$, observe that $F_{A*B}=\nu^{-1}((\usetr{(A*B)\cdot (A*B)^{-1}}{P_\nu})\cap E(P_{\nu}))=\nu^{-1}((A*B*B^{-1}*A^{-1})\cap E(P_{\nu}))$. Fix $a\in A$, $b\in B$ and notice that $\nu(ab)=a\cdot b\in\Phi(A*B)$, and therefore $ab\in\Phi(A*B)$.
	By Lemma \ref{lemma:germs}(\ref{lemma:germs.independence}), we can use $ab$ to compute $\Phi(A*B)$, that is, $\Phi(A*B)=\germ_{F_{A*B}} ab$.
	
	For the inclusion $\Phi(A)*\Phi(B)\scj\Phi(A*B)$, it is sufficient to prove that $\Phi(A)\Phi(B)\scj\Phi(A*B)$, since $\Phi(A*B)$ is a filter. Take $x\in \Phi(A)$, $y\in\Phi(B)$, so that $xx^{-1}\in F_A$, $yy^{-1}\in F_B$ and there exist $e\in F_A$, $f\in F_B$ such that $ex=ea$ and $fy=fb$. By a previous observation, $\nu(x)\in A$ and $\nu(y)\in B$. Define $g=xfx^{-1}e$. We have that
	\[\nu(g)=\nu(x)\cdot\nu(f)\cdot\nu(x)^{-1}\cdot\nu(e)\in A*B*B^{-1}*A^{-1}*A*A^{-1}=A*B*B^{-1}*A^{-1},\]
	and $\nu(g)\in E(P_{\nu})$ since $\nu(e),\nu(f)\in E(P_{\nu})$. Also
	\[\nu(xyy^{-1}x^{-1})=\nu(x)\cdot\nu(y)\cdot\nu(y)^{-1}\cdot\nu(x)^{-1}\in A*B*B^{-1}*A^{-1}\cap E(P_{\nu}).\]
	And finally,
	\[gxy=xfx^{-1}exy=xx^{-1}exfy=xx^{-1}exfb=xfx^{-1}exb=xfx^{-1}eab=gab.\]
	We conclude that $xy\in\Phi(A*B)$.
	
	For the other inclusion $\Phi(A*B)\scj \Phi(A)*\Phi(B)$, take $z\in\Phi(A*B)$ and $g\in F_{A*B}$ such that $gz=gab$. Define $y=a^{-1}gab$, and notice that $y\in \Phi(B)$, by Lemma \ref{lemma:germs}(\ref{lemma:germs.fs}), since $a^{-1}ga$ is idempotent and
	\[\nu(a^{-1}ga)=a^{-1}\cdot\nu(g)\cdot a\in A^{-1}*A*B*B^{-1}*A^{-1}*A=(B*B^{-1})^3=B*B^{-1}.\]
	Also,
	\[ay=aa^{-1}gab=gab=gz\leq z,\]
	which implies that $z\in\Phi(A)*\Phi(B)$.
	
	The continuity of $\Phi$ follows from the equality
	\[\Phi^{-1}(\{C\in \gr{P}\mid x\in C\})=\{A\in\gr{P_{\nu}}\mid \nu(x)\in A\},\]
	for every $x\in P$. And when we restrict the codomain of $\Phi$ to its image, we have that $\Phi$ is an open map since
	\[\Phi(\{A\in\gr{P_{\nu}}\mid a\in A\})=\{C\in \Phi(\gr{P_\nu})\mid a\in C\},\]
	for every $a\in P_\nu$.
	
	For the last part, let $X\in\gr{P}$. If $X=\Phi(A)$ for some $A\in\gr{P_\nu}$, then $\ran{X}=\Phi(\ran{A})\in\Image(\Phi)$. Now, suppose that $X*X^{-1}=\ran{X}=\Phi(B)$ for some $B\in\gr{P_\nu}$. By Lemma \ref{lemma:nucleus.preserves.completely.prime}, $\nu(X)\in\gr{P_\nu}$. We prove that $X=\Phi(\nu(X))=\germ_{F_{\nu(X)}} a$, where $a\in\nu(X)$ is a fixed element. Suppose first that $b\in X$. Then $\nu(bb^{-1})=\nu(b)\cdot \nu(b)^{-1}\in \nu(X)\cdot\nu(X)^{-1}\cap E(P_{\nu})$, so that $bb^{-1}\in F_{\nu(X)}$. Also, since $b\leq \nu(b)$, we have that $bb^{-1}\nu(b)=b=bb^{-1}b$, so that $b\in\germ_{F_{\nu(X)}} \nu(b)=\germ_{F_{\nu(X)}} a$, where the last equality follows from Lemma \ref{lemma:filter.doesnt.depend.on.a}. Now suppose that $b\in \germ_{F_{\nu(X)}} a$. Then, there exists $e\in F_{\nu(X)}$ such that $eb=ea$. By the definition of $F_{\nu(X)}$, there exists a pair of compatible elements $r,s\in X$ such that $\nu(e)\geq \nu(r)\cdot\nu(s)^{-1}=\nu(rs^{-1})$. Observe that $\nu(rs^{-1})\in X*X^{-1}=\Phi(B)$, and hence, $\nu(e)\in\Phi(B)$. By the first part of the proof, $e\in\Phi(B)=X*X^{-1}$, so that $eb=ea\in X*X^{-1}*X=X$. Finally, $eb\leq b$ implies that $b\in X$.
\end{proof}

\begin{remark}
	The proof of Theorem \ref{thm:nucleus.pseudogroup.hom.grp} works practically the same if we replace the pseudogroup $P$ for an inverse semigroup $S$ and the groupoids $\gr{P}$ and $\gr{P_\nu}$ for $\grl(S)$ and $\grl(S_\nu)$ respectively.
\end{remark}


\section{Imposing join relations on bisections of an étale groupoid}\label{sec:imposing}

Let $(X,\tau)$ be a sober topological space with a basis $B$ that is closed under intersection and let $\cov$ be a coverage on $B$. As done by the author in \cite[Section 2]{de_castro_2020}, we can define a nucleus $\nu$ on $\tau$ and using the map $\eta$ defined in Subsection \ref{subsec:groupoid}, we define a subspace $X_\cov=\{\eta^{-1}(\nu^{-1}(A))\in X\mid A\in \Sp{\tau_\nu}\}$. Notice that, due to Theorem \ref{thm:nucleus.pseudogroup.hom.grp}, we have that $X_\cov$ is homeomorphic to $\Sp{\tau_\nu}$. If moreover the space $X$ is $T_1$, we can use \cite[Lemma 2.2]{de_castro_2020} to describe $X_\cov$. Since this description will be need in the next section as well, we write it as a lemma.

\begin{lemma}\cite[Lemma 2.2]{de_castro_2020}\label{lem:subspace.from.coverage}
	Let $(X,\tau)$ be a $T_1$ sober topological space with a basis $B$ that is closed under intersection, let $\cov$ be a coverage on $B$ and $\nu:\tau\to\tau$ the nucleus defined from the coverage $\cov$. Then, $\Sp{\tau_\nu}$ can be identified withe the the subspace $X_\cov$ of all points $x\in X$ with the following property: if $Z\in\cov(U)$ for some $U\in B$ and $x\notin V$ for all $V\in Z$, then $x\notin U$.
\end{lemma}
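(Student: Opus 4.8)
The plan is to apply the groupoid embedding of Theorem \ref{thm:nucleus.pseudogroup.hom.grp} to the frame $\tau$, regarded as a pseudogroup consisting only of idempotents, so that $\gr{\tau}=\Sp{\tau}$, $\gr{\tau_\nu}=\Sp{\tau_\nu}$ and $\Phi(A)=\nu^{-1}(A)$ embeds $\Sp{\tau_\nu}$ into $\Sp{\tau}$. Since $X$ is sober, $\eta$ identifies $X$ with $\gr{\tau}$, a point $x$ corresponding to its neighbourhood filter $N_x=\{U\in\tau\mid x\in U\}$. By definition $X_\cov=\eta^{-1}(\{\nu^{-1}(A)\mid A\in\Sp{\tau_\nu}\})=\eta^{-1}(\Image(\Phi))$, so the task splits into two parts: characterise which neighbourhood filters $N_x$ lie in $\Image(\Phi)$, and then translate that characterisation into the language of the coverage.

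For the first part, note that by Lemma \ref{lemma:nucleus.preserves.completely.prime} the set $\nu(N_x)$ is a completely prime filter of $\tau_\nu$, and since $U\leq\nu(U)$ for every $U$ one checks $\nu(N_x)=N_x\cap\tau_\nu$. If $N_x=\nu^{-1}(A)$ for some $A\in\Sp{\tau_\nu}$, intersecting with $\tau_\nu$ gives $A=N_x\cap\tau_\nu=\nu(N_x)$; hence, using Lemma \ref{lemma:nucleus.preserves.completely.prime} for the reverse implication, $N_x\in\Image(\Phi)$ if and only if $N_x=\Phi(\nu(N_x))=\nu^{-1}(\nu(N_x))$. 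Unwinding the right-hand side gives $\nu^{-1}(\nu(N_x))=\{U\in\tau\mid x\in\nu(U)\}$, which always contains $N_x$ because $U\leq\nu(U)$; so the equality holds exactly when
\[ x\in\nu(U)\ \Longrightarrow\ x\in U\quad\text{for every }U\in\tau. \]
Writing $(\ast)$ for this condition, we conclude that $X_\cov$ is precisely the set of points satisfying $(\ast)$.

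It remains to prove that $(\ast)$ is equivalent to the coverage condition in the statement, and this is where the explicit description of the nucleus $\nu$ from \cite{de_castro_2020} is needed and where I expect the real work to lie. Recall that $\nu(U)$ is the union of the $\cov$-closure of the order ideal $\{b\in B\mid b\scj U\}$, the smallest family of basic opens below $U$ closed under the rule that a cover $Z\in\cov(W)$ contained in the family forces $W$ into it. The implication $(\ast)\Rightarrow$ coverage condition is the easier one: given $W\in B$ and $Z\in\cov(W)$ with $x\notin V$ for all $V\in Z$, put $O=\bigcup Z$, so that $x\notin O$; since every $V\in Z$ is below $O$ and $Z\in\cov(W)$, the closure rule places $W$ inside $\nu(O)$, and $(\ast)$ applied to $O$ then rules out $x\in W$. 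For the converse I would induct along the transfinite construction of the $\cov$-closure: assuming $x$ respects every cover and $x\in\nu(U)$, choose a basic open in the closure containing $x$ and show by induction on the stage at which it was adjoined that $x\in U$, the inductive step being a direct application of the coverage condition to the cover witnessing that adjunction. The main obstacle is carrying out this induction cleanly and, before that, confirming that the nucleus of \cite{de_castro_2020} indeed admits the above closure description; the hypotheses that $X$ be sober and $T_1$ enter respectively through the identification $X\cong\gr{\tau}$ and through the point-level bookkeeping of \cite[Lemma 2.2]{de_castro_2020}.
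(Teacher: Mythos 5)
First, a point of reference: the paper does not prove this lemma at all --- it is imported verbatim from \cite[Lemma 2.2]{de_castro_2020} --- so there is no internal proof to compare against, and your attempt has to be judged on its own terms. The first half of your argument is correct and is a genuinely nice use of this paper's machinery: identifying $X$ with $\gr{\tau}$ via $\eta$, invoking Theorem \ref{thm:nucleus.pseudogroup.hom.grp} for the embedding $\Phi(A)=\nu^{-1}(A)$, checking $\nu(N_x)=N_x\cap\tau_\nu$, and reducing membership of $N_x$ in $\Image(\Phi)$ to the condition $(\ast)$: $x\in\nu(U)$ implies $x\in U$ for every $U\in\tau$. That reduction is complete as written.

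The gap is in the second half, and it is the one you flag yourself: the equivalence of $(\ast)$ with the coverage condition rests on an explicit description of $\nu(U)$ as the union of the $\cov$-closure of $\{b\in B\mid b\scj U\}$, which you have not verified and which is in fact suspect --- a basic open contained in the union of that closure family need not itself belong to the family, so the assignment $U\mapsto\bigcup(\text{closure of }\lsetr{U}{B})$ has no visible reason to be idempotent, and the transfinite induction you sketch is anchored to a formula for $\nu$ that may simply not hold. There is a cleaner route, which also shows where the $T_1$ hypothesis actually does its work. By $T_1$ the set $X\setminus\{x\}$ is open and is the largest open set not containing $x$; the contrapositive of $(\ast)$ reads ``$U\scj X\setminus\{x\}$ implies $\nu(U)\scj X\setminus\{x\}$'', which by (N1) and (N2) is equivalent to the single identity $\nu(X\setminus\{x\})=X\setminus\{x\}$, i.e.\ to $X\setminus\{x\}\in\tau_\nu$. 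Since the fixed points of the nucleus built from a coverage on the basis are exactly the opens $V$ for which $\{b\in B\mid b\scj V\}$ is $\cov$-closed, and $\{b\in B\mid b\scj X\setminus\{x\}\}=\{b\in B\mid x\notin b\}$, this is verbatim the condition in the statement, with no induction over the stages of the closure and no need for an explicit formula for $\nu(U)$. I would replace your step on the equivalence with this argument, after confirming the stated characterization of $\tau_\nu$ against the actual definition of the nucleus in the cited paper.
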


The goal in this section is to generalize this construction replacing $X$ for an étale groupoid $\grp$ and $B$ for a generating inverse subsemigroup of $\bis(\grp)$. Most of the needed results were already proved in this paper. We need one more lemma.

\begin{lemma}\label{lemma:restiction.cov}
	Let $\cov$ be a coverage on a inverse semigroup $S$, and $\cov_{E}$ the restriction to its idempotents semilattice $E:=E(S)$. Then:
	\begin{enumerate}[(i)]
		\item\label{lemma:restriction.cov.idempotents} $E(P_\cov(S))=F_{\cov_E}(E)$;
		\item\label{lemma:restiction.cov.unit.space} The map $\Phi:\gr{P_\cov(S)}^{(0)}\to \Sp{F_{\cov_E}(E)}$ given by $\Phi(A)=A\cap F_{\cov_E}(E)$ is a homeomorphism.
	\end{enumerate}
\end{lemma}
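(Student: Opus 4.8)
The plan is to first pin down the idempotents of $P_{\cov}(S)$ concretely and deduce (i), and then to obtain (ii) as the restriction to the unit space of the standard correspondence between the filter groupoid of a pseudogroup and the spectrum of its frame of idempotents. For (i), recall from Remark \ref{rem:universal.pseudogroup} that $P_{\cov}(S)$ is the set of $\cov$-closed compatible order ideals of $S$. First I would show $E(P_{\cov}(S))=\{A\in P_{\cov}(S)\mid A\scj E\}$. If $A\scj E$, then the product of $A$ with itself in $C(S)$ is again $A$ (for $e,f\in E$ one has $ef\le e$ and $e=ee$), and $A$ is fixed by the nucleus, so $A$ is idempotent in $P_{\cov}(S)=C(S)_\nu$. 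Conversely, an idempotent of $C(S)_\nu$ is, by the idempotent-purity of the natural map in Proposition \ref{prop:nucleus}, an idempotent of $C(S)$, hence contained in $E$ by \cite[Lemma of Theorem 1.15]{MR0325820}.

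Next I would identify $\{A\in P_{\cov}(S)\mid A\scj E\}$ with $F_{\cov_{E}}(E)$. An order ideal of $S$ contained in $E$ is exactly an order ideal of the semilattice $E$, because an element below an idempotent is idempotent, and any subset of $E$ is compatible; so it only remains to check that for $A\scj E$ the notions ``$\cov$-closed in $S$'' and ``$\cov_{E}$-closed in $E$'' agree. One implication is immediate. For the other, the sole difficulty is a covering $X\in\cov(a)$ with $X\scj A\scj E$ but $a$ a priori not idempotent; here the cover-to-join property of $\pi$ (Theorem \ref{thm:univesal.property}) gives $\pi(a)=\bigvee_{x\in X}\pi(x)$, a join of idempotents of $P_{\cov}(S)$ and hence itself idempotent, so $\pi(a)\scj E$ and therefore $a\in\pi(a)\scj E$. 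Thus $a\in E$, and $\cov_{E}$-closedness yields $a\in A$. This is the step I expect to be the crux, being exactly where the weaker (not necessarily idempotent-pure) coverage could cause trouble. With it, (i) follows.

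For (ii) write $P=P_{\cov}(S)$, so that $E(P)=F_{\cov_{E}}(E)$ by (i) and $\Phi(A)=A\cap E(P)$ maps into $\Sp{F_{\cov_{E}}(E)}=\gr{E(P)}$. I would first verify that $\Phi$ is well defined: for a unit $A$, the set $A\cap E(P)$ is upward closed, closed under meets, proper and nonempty, and completely prime on the frame $E(P)$, since any family of idempotents is compatible so a join lying in $A$ already has a member in $A$. Injectivity comes from $A=\usetr{(A\cap E(P))}{P}$: each $s\in A$ dominates some $c\in A\cap E(P)$ (choose $c\in A$ below both $s$ and a fixed idempotent of $A$, so that $c\in E$). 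For surjectivity I would use germs: given a completely prime filter $F$ on $E(P)$ and any $e\in F$, Lemma \ref{lemma:germs}(\ref{lemma:germs.completely.prime}) shows that $\germ_F\, e$ is a completely prime filter on $P$, and a direct check gives $\germ_F\, e\cap E(P)=F$, so $A:=\germ_F\, e$ is a unit with $\Phi(A)=F$.

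Finally, for the topology I would reduce the basic open sets on the unit space to idempotent ones. Setting $\hat s=\bigvee\{e\in E(P)\mid e\le s\}$, which exists and is idempotent because the set is compatible, one checks $\hat s\le s$ and, for a unit $A$, that $s\in A$ if and only if $\hat s\in A$; hence $V_s\cap\gr{P}^{(0)}=V_{\hat s}\cap\gr{P}^{(0)}$. Under $\Phi$ the sets $V_e\cap\gr{P}^{(0)}$ with $e\in E(P)$ correspond bijectively to the basic open sets $\{F\mid e\in F\}$ of $\Sp{F_{\cov_{E}}(E)}$, so $\Phi$ carries a basis onto a basis and is therefore a homeomorphism.
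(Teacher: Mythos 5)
Your proof is correct, and for part (ii) it is essentially the paper's argument: your $\germ_F\, e$ coincides with the upper set $\usetr{F}{P_\cov(S)}$ that the paper uses as the explicit inverse $\Psi$, and your reduction of the basic open sets $V_s$ on the unit space to the idempotent ones $V_{\hat s}$ is the same basis argument the paper makes. The genuine difference is in part (i). The paper disposes of (i) in one line by asserting that the nucleus $\nu$ on $C(S)$ restricts to $C(E)$ and that this restriction ``gives the same nucleus defined by Johnstone'' for $\cov_E$; the nontrivial content hidden in that assertion is exactly the point you isolate as the crux, namely that a $\cov_E$-closed order ideal of $E$ is automatically $\cov$-closed in $S$ — equivalently, that $X\in\cov(a)$ with $X\scj E$ forces $a\in E$ even though the coverage is not assumed idempotent-pure. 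Your derivation of this from Theorem \ref{thm:univesal.property} (each $\pi(x)$ is idempotent, so $\pi(a)=\bigvee_{x\in X}\pi(x)$ is a join of compatible idempotents and hence idempotent, whence $a\in E$ by idempotent-purity of $\pi$) is a clean and correct justification of a step the paper leaves implicit. The only cosmetic addition I would suggest is a sentence observing that once the two sets of closed ideals coincide, the two nuclei — and hence the products, meets and joins on $E(P_\cov(S))$ and on $F_{\cov_E}(E)$ — coincide as well, so that the equality in (i) is an identification of frames and not merely of underlying sets; this is what part (ii) tacitly uses when it speaks of $\Sp{F_{\cov_E}(E)}$.
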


\begin{proof}
	(\ref{lemma:restriction.cov.idempotents}) By definition, $P_\cov(S)$ is the pseudogroup given by the nucleus $\nu:C(S)\to C(S)$ in Proposition \ref{prop:nucleus.from.cov}. By an unnumbered lemma after \cite[Theorem 1.15]{MR0325820}, the idempotents of $C(S)$ are subsets of $E$ and $E(C(S))=C(E)$. Since $\nu$ is idempotent-pure, we can restrict $\nu$ to $C(E)$ both in the domain and codomain, which gives the same nucleus defined in \cite{MR698074} and therefore $E(P_\cov(S))=\nu(E(C(S)))=\nu(C(E))=F_{\cov_E}(E)$.
	
	(\ref{lemma:restiction.cov.unit.space}) As discussed in \cite[Section 2]{MR3077869}, the elements of $\gr{P_\cov(S)}^{(0)}$ are the completely prime filters on $P_\cov(S)$ that contains idempotents, so $\Phi$ is well-defined. For the inverse, we set $\Psi:\Sp{F_{\cov_E}(E)}\to \gr{P_\cov(S)}^{(0)}$ as $\Psi(B)=\usetr{B}{P_\cov(S)}$. It is clear that $\usetr{B}{P_\cov(S)}$ is a filter that contains idempotents. We need to check that $\usetr{B}{P_\cov(S)}$ is completely prime. For that, let $X$ be a compatible subset of $P_\cov(S)$ and suppose that $\bigvee X\in \usetr{B}{P_\cov(S)}$. There exists $e\in B$ such that $e\leq \bigvee X$, so that $e=e\bigvee X=\bigvee_{x\in X}ex\in B$. Since $B$ is completely prime, there exists $x\in X$ such that $ex\in B$. Because $ex\leq x$, we conclude that $x\in\usetr{B}{P_\cov(S)}$, and hence $\Psi$ is well-defined.
	
	Straightforward computations show that $\Phi(\Psi(B))=B$ for all $B\in \Sp{F_{\cov_E}(E)}$ and that $\Psi(\Phi(A))\scj A$ for all $A\in \gr{P_\cov(S)}^{(0)}$. We check the inclusion $A\scj \Psi(\Phi(A))$ for $A\in \gr{P_\cov(S)}^{(0)}$. Take $a\in A$ and $e\in E(P_\cov(S))\cap A$ (which exists by the description of $\gr{P_\cov(S)}^{(0)}$). Since $A$ is a filter, we have that $e\wedge a\in A$. Also, $e\wedge a\in E(P_\cov(S))$ because $e\wedge a\leq e$. Hence $e\wedge a\in \Phi(A)$, from where it follows that $a\in\Psi(\Phi(A))$.
	
	Given $A\in \gr{P_\cov(S)}^{(0)}$ and $a\in A$, we saw above that there exists $f\in E(P_\cov(S))\cap A$ such that $f\leq a$. This implies that the family $\{A\in\gr{P_\cov(S)}^{(0)}\mid f\in A\}_{f\in E(P_\cov(S))}$ is a basis for the topology on $\gr{P_\cov(S)}^{(0)}$. The continuity of $\Phi$ and $\Psi$ then follow from the observation that for $f\in E(P_\cov(S))$ and $B\in \Sp{F_{\cov_E}(E)}$, we have that $f\in B$ if and only if $f\in \Phi(B)$.
\end{proof}

For the next result, we need a little bit of set-up. Let $\grp$ be a sober étale groupoid, let $\bis(\grp)$ be the pseudogroup of bisections and suppose the $S\scj \bis(\grp)$ is a inverse subsemigroup that generates $\bis(\grp)$. Let $\cov$ be a coverage on $S$, $\cov'$ the coverage given by Lemma \ref{lemma:semigroup.generates.pseudogroup} and $\overline{\cov}=\cov\cup\cov'$. The set of idempotents of $\bis(\grp)$, denoted by $E$, is exactly the subspace topology on $\grp^{(0)}$ and $D:=S\cap E$ is a basis for this topology. If we restrict the coverages $\cov,\cov',\overline{\cov}$ we get coverages $\cov_D,\cov'_D,\overline{\cov}_D$ on $D$. From the discussion at the beginning of this section, we get a subspace of $\grp^{(0)}$, which we will denote by, $\grp^{(0)}_{\cov}$. As observed at the beginning of the section, $\grp^{(0)}_{\cov}$ is homeomorphic to $\Sp{F_{\overline{\cov}_D}(D)}$.

\begin{proposition}\label{prop:reduction.groupoid}
	With the above conditions, $\gr{P_{\overline{\cov}}(S)}$ is isomorphic, as topological groupoids, to the reduction of $\grp$ to $\grp^{(0)}_{\cov}$.
\end{proposition}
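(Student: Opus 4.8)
The plan is to realize $P_{\overline{\cov}}(S)$ as the fixed points of a nucleus on the pseudogroup $\bis(\grp)$ itself, so that Theorem \ref{thm:nucleus.pseudogroup.hom.grp} produces the desired groupoid embedding directly into $\gr{\bis(\grp)}$, which is $\grp$ by soberness. First I would record two descriptions coming from the common pseudogroup $C(S)$: by Lemma \ref{lemma:semigroup.generates.pseudogroup}, $\bis(\grp)\cong P_{\cov'}(S)=(C(S))_{\nu_{\cov'}}$, where $\nu_{\cov'}$ is the nucleus on $C(S)$ attached to $\cov'$ (Proposition \ref{prop:nucleus.from.cov}); and by definition $P_{\overline{\cov}}(S)=(C(S))_{\nu_{\overline{\cov}}}$. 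Since $\overline{\cov}=\cov\cup\cov'\supseteq\cov'$, every $\overline{\cov}$-closed compatible order ideal is $\cov'$-closed, so the intersection defining $\nu_{\overline{\cov}}(A)$ ranges over fewer sets than that defining $\nu_{\cov'}(A)$; hence $\nu_{\cov'}(A)\scj\nu_{\overline{\cov}}(A)$ for all $A\in C(S)$, i.e.\ $\nu_{\cov'}\le\nu_{\overline{\cov}}$.

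The next step is a composition-of-nuclei argument: whenever $\nu_1\le\nu_2$ are nuclei on a pseudogroup $Q$, the assignment $\mu(a)=\nu_2(a)$ is a well-defined nucleus on $Q_{\nu_1}$. One checks that $\nu_2(a)$ is a fixed point of $\nu_1$ (from (N1) for $\nu_1$ and idempotency of $\nu_2$), and then (N1)--(N4) for $\mu$ follow from those of $\nu_2$ together with the product $a\cdot b=\nu_1(ab)$ on $Q_{\nu_1}$; its fixed points are $(Q_{\nu_1})_\mu=Q_{\nu_2}$ because $Q_{\nu_2}\scj Q_{\nu_1}$. Applying this with $Q=C(S)$, $\nu_1=\nu_{\cov'}$, $\nu_2=\nu_{\overline{\cov}}$ and transporting along the isomorphism of Lemma \ref{lemma:semigroup.generates.pseudogroup} gives a nucleus $\mu$ on $\bis(\grp)$ with $(\bis(\grp))_\mu\cong P_{\overline{\cov}}(S)$. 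Feeding $\mu$ into Theorem \ref{thm:nucleus.pseudogroup.hom.grp} yields an embedding of topological groupoids $\Phi\colon\gr{P_{\overline{\cov}}(S)}\to\gr{\bis(\grp)}$; and since $\grp$ is sober the canonical map $\eta\colon\grp\to\gr{\bis(\grp)}$ is an isomorphism of topological groupoids, so $\Psi=\eta^{-1}\circ\Phi\colon\gr{P_{\overline{\cov}}(S)}\to\grp$ is again an embedding of topological groupoids.

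It then remains to identify $\Image(\Psi)$ with the reduction $\grp|_{\grp^{(0)}_{\cov}}$, and here the final clause of Theorem \ref{thm:nucleus.pseudogroup.hom.grp}---that $X\in\Image(\Phi)$ if and only if $\ran{X}\in\Image(\Phi)$---does the decisive work. Granting the unit-space identification $\Image(\Psi)\cap\grp^{(0)}=\grp^{(0)}_{\cov}$ (treated below), this clause gives $\Image(\Psi)=\{g\in\grp\mid\ran{g}\in\grp^{(0)}_{\cov}\}$; and since $\Image(\Psi)$ is a subgroupoid closed under $\mathbf{d}$, any such $g$ satisfies $\dom{g}\in\Image(\Psi)\cap\grp^{(0)}=\grp^{(0)}_{\cov}$ as well, so this set equals $\{g\in\grp\mid\dom{g},\ran{g}\in\grp^{(0)}_{\cov}\}$, which is exactly the reduction. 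Because $\Phi$ is a homeomorphism onto its image and a groupoid isomorphism there, the same holds for $\Psi$, and we conclude that $\gr{P_{\overline{\cov}}(S)}$ is isomorphic to $\grp|_{\grp^{(0)}_{\cov}}$ as topological groupoids.

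The hard part will be the unit-space identification $\Image(\Psi)\cap\grp^{(0)}=\grp^{(0)}_{\cov}$, since it requires reconciling the pseudogroup-level nucleus $\mu$ with the topological construction of $\grp^{(0)}_{\cov}$. The route I would take is this: restricting $\mu$ to the idempotents $E(\bis(\grp))=\tau$ (the topology of $\grp^{(0)}$) yields a frame nucleus whose fixed points are $E(P_{\overline{\cov}}(S))=F_{\overline{\cov}_D}(D)$ by Lemma \ref{lemma:restiction.cov}(\ref{lemma:restriction.cov.idempotents}); as a nucleus on a frame is determined by its fixed points, this restriction coincides with the Johnstone nucleus $\nu$ on $\tau$ built from $\overline{\cov}_D$ that is used to define $\grp^{(0)}_{\cov}$. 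Consequently the restriction of $\Phi$ to unit spaces is exactly the spectrum-level map associated to $\nu$, whose image is $\Sp{F_{\overline{\cov}_D}(D)}$; composing with $\eta^{-1}$ and invoking Lemma \ref{lemma:restiction.cov}(\ref{lemma:restiction.cov.unit.space}) together with the description in Lemma \ref{lem:subspace.from.coverage} identifies this image with $\grp^{(0)}_{\cov}$ as a subspace of $\grp^{(0)}$, which is precisely what is needed.
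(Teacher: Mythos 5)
Your proposal is correct and follows essentially the same route as the paper: factor the nucleus $\nu_{\overline{\cov}}$ on $C(S)$ through $\nu_{\cov'}$ to obtain a nucleus on $P_{\cov'}(S)\cong\bis(\grp)$, apply Theorem \ref{thm:nucleus.pseudogroup.hom.grp} together with soberness of $\grp$, and identify the unit-space image via Lemma \ref{lemma:restiction.cov} and Lemma \ref{lem:subspace.from.coverage}. Your explicit verification that the image is the full reduction (using that $\Image(\Phi)$ is a subgroupoid closed under $\mathbf{d}$) and your spelled-out composition-of-nuclei step are just more detailed versions of steps the paper asserts tersely.
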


\begin{proof}
	 The coverages $\cov'$ and $\overline{\cov}$ gives two nuclei, $\nu'$ and $\overline{\nu}$ respectively, on $C(S)$ as in Proposition \ref{prop:nucleus.from.cov}. Since for each $a\in S$, $\cov'\scj\overline{\cov}$, we can factor $\overline{\nu}$ through $\nu'$ and get a nucleus $\nu:P_{\cov'}(S)\to P_{\cov'}(S)$ in such way that $\nu(P_{\cov'}(S))=P_{\overline{\cov}}(S)$. From Theorem \ref{thm:nucleus.pseudogroup.hom.grp}, we have an embedding $\Phi:\gr{P_{\overline{\cov}(S)}}\to \gr{P_{\cov'}(S)}$ such that the image is the reduction of $\gr{P_{\cov'}(S)}$ to the image of $\gr{P_{\overline{\cov}(S)}}^{(0)}$.
	 
	 The nucleus $\nu$ restricts to a nucleus on $F_{\overline{\cov}_D}(D)$. Using the isomorphism from Lemma \ref{lemma:restiction.cov}(\ref{lemma:restiction.cov.unit.space}), we can restrict $\Phi$ to $\Sp{F_{\overline{\cov}_D}(D)}$, which by Lemma \ref{lemma:filter.doesnt.depend.on.a}\eqref{lemma:germ.equal.preimage} is taking preimages, which is $\Sp{\nu}$.
	 
	By Lemma \ref{lemma:semigroup.generates.pseudogroup}, $\bis(\grp)\cong P_{\cov'}(S)$. This gives an isomorphism between $\gr{\bis(\grp)}$ and $\gr{P_{C'}(S)}$. Since $\grp$ is sober the map $\eta$ given in Subsection \ref{subsec:groupoid} gives a topological groupoid isomorphism $\grp\cong \gr{\bis(\grp)}$. 
	
	We can then build the following commutative diagram
	\[
	\begin{tikzcd}
		\gr{P_{\overline{C}(S)}} \ar[r,hook] & \gr{P_{C'}(S)} \ar[r,"\simeq"] & \gr{\bis(\grp)} \ar[r,"\simeq"] & \grp \\
		\gr{P_{\overline{C}(S)}}^{(0)} \ar[r,hook] \ar[u,hook] & \gr{P_{C'}(S)}^{(0)} \ar[r,"\simeq"] \ar[u,hook] & \gr{\bis(\grp)}^{(0)} \ar[r,"\simeq"] \ar[u,hook] & \grp^{(0)} \ar[u,hook] \\
		\Sp{F_{\overline{\cov}_D}(D)} \ar[u,"\simeq"] \ar[r,hook] & \Sp{F_{\cov'_D}(D)} \ar[u,"\simeq"] \ar[urr, bend right = 10] & &.
	\end{tikzcd}
	\]
	
	By the discussion in the beginning of the proof and the discussion before the proposition, we see that, via the diagram, $\gr{P_{\overline{\cov}}(S)}$ is sent to the reduction of $\grp$ to $\grp^{(0)}_{\cov}$, from where the result follows.	
\end{proof}


\section{Tight coverages and tight filters}\label{sec:tight}

We will now use the theory developed in this paper to study tight filters and tight groupoids introduced by Exel in \cite{MR2419901}. First we deal with tight filters. For that, fix $E$ a semilattice with $0$ and denote by $\filt(E)$ the set of all filters in $E$. Using the characteristic function $\chi_\xi$ of a filter $\xi\in\filt(E)$, we can define an injective map $\xi\in\filt(E)\to\chi_\xi\in\{0,1\}^E$. Using the product topology on $\{0,1\}^E$, where $\{0,1\}$ has the discrete topology, we can induce a topology on $\filt(E)$. This coincides with the patch topology described in Subsection \ref{subsec:groupoid}, when we identify $\filt(E)$ with $\grl(E)$. If we denote by $\ufilt(E)$ the set of all ultrafilters, we can define a \emph{tight filter} as an element of the closure of $\ufilt(E)$ in $\filt(E)$ with the patch topology. The set of all tight filters will be denoted by $\tfilt(E)$. This is not Exel's original definition, but a consequence of \cite[Theorem 12.9]{MR2419901}, but for our purposes, this description suffices.

Another way of defining tight filters is using coverages as done in \cite{MR3077869} and based on \cite{MR2974110,MR2465914}. For each $a\in E$, we define $\covt(a)$ to be all finite subsets $Z\scj\lset{a}$ such that for every $0\neq b\leq a$, there exists $z\in Z$ such that $bz\neq 0$. Then $\covt=\{\covt\}_{a\in E}$ is a strong coverage on $E$. For each $a\in E$, the elements of $\covt(a)$ will be called tight coverings of $a$. By \cite[Proposition 2.25]{MR2974110}, a filter $\xi$ is tight if and only if for every $a\in\xi$ and every tight covering $Z$ of $a$, we have that $Z\cap\xi\neq\emptyset$.

One question that arises is: what is the relationship between $\tfilt(E)$ and the frame $F_\covt(E)$. In order to answer this question, we need to consider another topology on $\tfilt(E)$. For each $e\in E$, we define $V_e=\{\xi\in\tfilt(E)\mid e\in\xi\}$. The family $\{V_e\}_{e\in E}$ is a basis for a topology on $E$, which we will denote by $\tau_E$. In general, $\tau_E$ is coarser than $\tau_{patch}$. This other topology on $\tfilt(E)$ can also be induced from $\{0,1\}^E$, but we have to consider $\{0,1\}$ with the topology $\{\emptyset,\{1\},\{0,1\}\}$, ie the Sierpi\'nski space. Before we begin answering the posed question, we make a final observation: $V_e\neq\emptyset$ for all $e\in E\setminus\{0\}$, since the filter $\uset{e}$ is contained in some ultrafilter by an easy application of Zorn's Lemma.

\begin{lemma}\label{lemma:union.Ve.tight.filters}
	 If $e\in E$ and $Z$ is a tight cover of $e$, then $V_e=\bigcup_{z\in Z} V_z$.
\end{lemma}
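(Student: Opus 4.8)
The plan is to prove the two inclusions $V_e \supseteq \bigcup_{z \in Z} V_z$ and $V_e \subseteq \bigcup_{z \in Z} V_z$ separately, working directly from the definitions of $V_e$ and of a tight filter.

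For the inclusion $\bigcup_{z \in Z} V_z \subseteq V_e$, I would argue that each $V_z \subseteq V_e$ holds already at the level of individual sets. Since $Z$ is a tight cover of $e$, we have $Z \subseteq \lset{e}$, so each $z \in Z$ satisfies $z \leq e$. Now if $\xi \in V_z$, then $z \in \xi$; because $\xi$ is a filter, hence upper closed, and $z \leq e$, we conclude $e \in \xi$, i.e. $\xi \in V_e$. Taking the union over $z \in Z$ gives the first inclusion. This direction is routine and uses only that $\xi$ is upper closed.

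For the reverse inclusion $V_e \subseteq \bigcup_{z \in Z} V_z$, take any $\xi \in V_e$, so $e \in \xi$ and $\xi$ is a tight filter. Here is where the characterization of tight filters in terms of coverings does the work: by the cited result \cite[Proposition 2.25]{MR2974110} recalled just before the lemma, a filter $\xi$ is tight if and only if for every $a \in \xi$ and every tight covering $W$ of $a$, we have $W \cap \xi \neq \emptyset$. Applying this with $a = e$ and $W = Z$ (which is legitimate since $e \in \xi$ and $Z$ is a tight cover of $e$), I obtain some $z \in Z$ with $z \in \xi$. That precisely says $\xi \in V_z \subseteq \bigcup_{z \in Z} V_z$, completing the proof.

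The two inclusions together give the claimed equality. I do not anticipate a genuine obstacle here: the forward inclusion is immediate from upper-closedness, and the reverse inclusion is essentially a direct translation of the coverage characterization of tightness into the language of the basic open sets $V_z$. The only point requiring slight care is making sure the characterization is invoked with the correct instance, namely $a = e$ and the tight covering $Z$, and that $V_e$ is defined as a subset of $\tfilt(E)$ so that every $\xi$ under consideration is already tight; both are guaranteed by the hypotheses.
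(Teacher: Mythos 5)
Your proof is correct and follows essentially the same route as the paper's: the forward inclusion from $z\leq e$ and upper-closedness of filters, and the reverse inclusion from the covering characterization of tight filters applied to $e\in\xi$ and the tight cover $Z$. No issues.
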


\begin{proof}
	For any $z\in Z$, we have that $z\leq e$ so that $\bigcup_{z\in Z} V_z\scj V_e$. On the hand if $\xi\in V_e$, by the definition of tight filter, we have that $z\in\xi$ for some $z\in Z$. This implies the other inclusion $V_e\scj\bigcup_{z\in Z} V_z$.
\end{proof}

\begin{proposition}\label{prop:tight.filters.sober}
	The space $(\tfilt(E),\tau_E)$ is sober.
\end{proposition}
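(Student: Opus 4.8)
The plan is to verify soberness directly from the definition given in Subsection~\ref{subsec:groupoid}. Regarding $(\tfilt(E),\tau_E)$ as a groupoid equal to its own unit space, its frame of open sets is $\bis(\tfilt(E))$, and so I must show that the canonical map $\eta\colon\tfilt(E)\to\gr{\bis(\tfilt(E))}=\Sp{\bis(\tfilt(E))}$, sending a tight filter $\xi$ to its completely prime filter $\{U\in\bis(\tfilt(E))\mid \xi\in U\}$ of open neighbourhoods, is a homeomorphism. Since $\{V_e\}_{e\in E}$ is a basis for $\tau_E$, I would first record the observations that $V_{ef}=V_e\cap V_f$ (because tight filters are upward closed and closed under meets), that $V_0=\emptyset$ (no filter contains $0$), and that $V_e=\bigcup_{z\in Z}V_z$ for every tight cover $Z$ of $e$ by Lemma~\ref{lemma:union.Ve.tight.filters}; together these say exactly that $e\mapsto V_e$ is a $\covt$-cover-to-join map into the frame $\bis(\tfilt(E))$.

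Injectivity of $\eta$ amounts to the $T_0$ property: if $\xi\neq\xi'$ are tight filters, then some $e$ lies in one but not the other and the basic open $V_e$ separates them. The substantive step is surjectivity. Given a completely prime filter $\mathcal{P}$ on $\bis(\tfilt(E))$, I would set $\xi=\{e\in E\mid V_e\in\mathcal{P}\}$ and prove that $\xi\in\tfilt(E)$ with $\eta(\xi)=\mathcal{P}$. That $\xi$ is a filter follows from $V_{ef}=V_e\cap V_f$ and the upward closure of $\mathcal{P}$; properness follows from $V_0=\emptyset\notin\mathcal{P}$, and non-emptiness from applying complete primeness to any nonempty $U\in\mathcal{P}$ written as a union of basic opens $U=\bigcup_{e\in A}V_e$. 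Tightness of $\xi$ is exactly where Lemma~\ref{lemma:union.Ve.tight.filters} enters: if $e\in\xi$ and $Z$ is a tight cover of $e$, then $V_e=\bigcup_{z\in Z}V_z\in\mathcal{P}$, and complete primeness yields $z\in Z$ with $V_z\in\mathcal{P}$, i.e. $Z\cap\xi\neq\emptyset$. The equality $\eta(\xi)=\mathcal{P}$ then follows by writing an arbitrary open as a union of basic opens and using the complete primeness and upward closure of $\mathcal{P}$ for one inclusion, and the fact that $\xi\in V_e$ forces $V_e\in\mathcal{P}$ for the other.

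Finally, for any open set $U$ the basic open $\widehat{U}=\{\mathcal{P}\mid U\in\mathcal{P}\}$ of the spectrum satisfies $\eta^{-1}(\widehat{U})=U$; this gives continuity of $\eta$ immediately, and, since $\eta$ is a bijection, it also shows $\eta(U)=\widehat{U}$ is open, so that $\eta$ is a homeomorphism and $(\tfilt(E),\tau_E)$ is sober. I expect the main obstacle to be the surjectivity step, and within it the two points that the pulled-back filter $\xi$ is genuinely tight and that $\mathcal{P}$ is recovered exactly as $\eta(\xi)$; both hinge on Lemma~\ref{lemma:union.Ve.tight.filters} and on testing complete primeness against the basic covers $U=\bigcup_{e\in A}V_e$.
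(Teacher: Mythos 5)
Your argument is correct, but it follows a different route from the paper's. You verify sobriety directly against the definition from Subsection~\ref{subsec:groupoid}, showing that the unit map $\eta$ into the spectrum of $\tau_E$ is a homeomorphism: the substantive work is surjectivity, where you pull a completely prime filter $\mathcal{P}$ back to $\xi=\{e\in E\mid V_e\in\mathcal{P}\}$ and check that $\xi$ is a tight filter with $\eta(\xi)=\mathcal{P}$. The paper instead invokes the equivalent characterization recorded in the remark of Subsection~\ref{subsec:groupoid} (a $T_0$ space is sober iff every meet-irreducible open proper subset $U$ equals $X\setminus\overline{\{x\}}$ for some point $x$), and from such a $U$ builds the point $\xi=\{e\in E\mid V_e\nsubseteq U\}$. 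The two constructions are dual to one another (a meet-irreducible open corresponds to the join of the opens outside a completely prime filter), and both pivot on the same two ingredients at the same place: $V_{ef}=V_e\cap V_f$ to get a filter, and Lemma~\ref{lemma:union.Ve.tight.filters} together with primeness (complete primeness of $\mathcal{P}$ in your version, meet-irreducibility of $U$ in the paper's) to get tightness. Your version has the small advantage of working straight from the paper's primary definition of sober and of producing the inverse of $\eta$ explicitly; the paper's version avoids handling completely prime filters and the recovery step $\eta(\xi)=\mathcal{P}$, at the cost of citing the equivalence of the two formulations of sobriety. One point worth making explicit in your write-up: properness of $\xi$ (i.e.\ $0\notin\xi$) comes from $V_0=\emptyset\notin\mathcal{P}$, which in turn uses that a (completely prime) filter is by definition a proper upward-closed subset; you assert this but should state it, as it is also what guarantees that the nonempty $U\in\mathcal{P}$ you use for non-emptiness of $\xi$ really is nonempty.
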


\begin{proof}
	We first verify that $(\tfilt(E),\tau_E)$ is $T_0$. Given $\xi,\eta\in\tfilt(E)$ such that $\xi\neq\eta$, suppose without loss of generality that there exists $e\in\xi\setminus\eta$, then $V_e$ is such that $\xi\in V_e$, but $\eta\notin V_e$.
	
	Now, suppose that $U$ is a meet-irreducible open proper subset of $\tfilt(E)$ and define the set $\xi=\{e\in E\mid V_e\nsubseteq U\}$. We prove that $\xi\in\tfilt(E)$ and that $U=\tfilt(E)\setminus\overline{\{\xi\}}$. By hypothesis, there exists $\eta\in\tfilt(E)\setminus U$. For any $e\in\eta$, we have that $\eta\in V_e$ so that $e\in U$, that is, $U\neq\emptyset$. We have that $0\notin \xi$ because $V_0=\emptyset\scj U$. If $e\in\xi$ and $f\in E$ is such that $e\leq f$, then $V_e\scj V_f$, which implies that $f\in\xi$. For $e,f\in\xi$, since $V_{ef}=V_e\cap V_f$ and $U$ is meet-irreducible, we have that $ef\in\xi$. By Lemma \ref{lemma:union.Ve.tight.filters}, if $e\in\xi$ and $Z$ is a tight covering of $e$, then there exists $z\in Z$ such that $z\in\xi$. We conclude that $\xi\in\tfilt(E)$.
	
	Observe that $\tfilt(E)\setminus\overline{\{\xi\}}=\bigcup_{e\in E\setminus\xi}V_e$. Since $U$ is the union of the basics open sets contained in $U$, by the definition of $\xi$, we have that $U=\bigcup_{e\in E\setminus\xi}V_e=\tfilt(E)\setminus\overline{\{\xi\}}$. The result follows.
\end{proof}

\begin{lemma}\label{lemma:tight.nucleus}
	Let $\nu:C(E)\to C(E)$ be the nucleus corresponding to the tight coverage $\covt$ as in Proposition \ref{prop:nucleus.from.cov}. Then for all $e\in E$, we have that $\nu(\lset{e})=\{g\in E\mid \exists Z\text{ tight covering of }g\text{ such that }Z\scj\lset{e}\}$.
\end{lemma}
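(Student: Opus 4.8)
The plan is to identify the right-hand side, call it $R=\{g\in E\mid \exists\,Z\in\covt(g)\text{ with }Z\scj\lset{e}\}$, as the smallest $\covt$-closed order ideal of $E$ containing $\lset{e}$. Since by construction $\nu(\lset{e})$ is exactly the intersection of all $\covt$-closed compatible order ideals containing $\lset{e}$, and in a semilattice every subset is automatically compatible, this characterization gives the equality at once. First I would verify that $\lset{e}\scj R$: for $g\le e$ the singleton $\{g\}$ is a tight cover of $g$ by axiom (R), and clearly $\{g\}\scj\lset{e}$, so $g\in R$.

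Next I would check that $R$ is an order ideal. Suppose $g\in R$ via a tight cover $Z\in\covt(g)$ with $Z\scj\lset{e}$, and let $h\le g$. By the coverage property, $Zh\in\covt(gh)=\covt(h)$ (using $gh=h$ in the semilattice), and each $zh\le z\le e$ shows $Zh\scj\lset{e}$, so $h\in R$. The central step is then to show that $R$ is $\covt$-closed: if $Y\in\covt(g)$ with $Y\scj R$, choose for each $y\in Y$ a tight cover $Z_y\in\covt(y)$ with $Z_y\scj\lset{e}$; the transitivity axiom (T) for the strong coverage $\covt$ gives $\bigcup_{y\in Y}Z_y\in\covt(g)$, this union is finite because $Y$ and each $Z_y$ are finite, and it lies in $\lset{e}$, so $g\in R$.

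Finally I would assemble the two inclusions. Having shown that $R$ is a $\covt$-closed order ideal containing $\lset{e}$, minimality of $\nu(\lset{e})$ yields $\nu(\lset{e})\scj R$. For the reverse inclusion, note that $\nu(\lset{e})$ is itself $\covt$-closed and contains $\lset{e}$; given $g\in R$ with witnessing cover $Z\in\covt(g)$, we have $Z\scj\lset{e}\scj\nu(\lset{e})$, whence $g\in\nu(\lset{e})$ by closedness, so $R\scj\nu(\lset{e})$. Combining gives the claimed formula.

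I expect the only genuine subtlety to be the $\covt$-closedness of $R$, where it is essential both that the tight coverage satisfies the transitivity axiom (T) and that tight covers are finite, so that $\bigcup_{y\in Y}Z_y$ remains a legitimate (finite) tight cover. Everything else reduces to routine bookkeeping in the semilattice, where compatibility of order ideals is automatic and meets coincide with products.
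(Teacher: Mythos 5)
Your proof is correct and follows essentially the same route as the paper: both identify the right-hand side as the smallest $\covt$-closed order ideal containing $\lset{e}$ (compatibility being automatic in a semilattice) and then conclude by minimality of $\nu(\lset{e})$ in one direction and $\covt$-closedness of $\nu(\lset{e})$ in the other. The only cosmetic difference is that at the closedness step you invoke axiom (T) of the strong coverage $\covt$, whereas the paper verifies that instance of transitivity by hand (checking directly that $\bigcup_i Z_i$ is a tight cover of $f$); since the paper has already asserted that $\covt$ is a strong coverage, your shortcut is legitimate.
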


\begin{proof}
	Fix $e\in E$ and define $I=\{g\in E\mid \exists Z\text{ tight covering of }g\text{ such that }Z\scj\lset{e}\}$. We prove that $I$ is the smallest $\covt$-ideal containing $e$, so that $I=\nu(\lset{e})$. Clearly $e\in I$ since $\{e\}$ is a tight covering of $e$.
	
	Take $g\in I$ and $f\leq g$. There exists $Z$ a tight covering of $g$ such that $Z\scj\lset{e}$. Then $fZ$ is a tight covering of $f$ such that $fZ\scj\lset{e}$, whence $f\in I$.
	
	Suppose now that $f\in E$ is such that there exists a tight covering $X=\{x_1,\ldots,x_n\}$ of $f$ such that $X\scj I$. By the definition of $I$, for each $i=1,\ldots,n$, there exists a tight covering $Z_i$ of $x_i$ such that $Z_i\scj\lset{e}$. Consider $Z=\bigcup_{i=1}^nZ_i$ and observe that $Z\scj\lset{e}$. We claim that $Z$ is a tight covering of $f$. Indeed, if $0\neq g\leq f$, then there exists $i\in\{1,\ldots,n\}$ such that $x_ig\neq 0$. Since $x_ig\leq x_i$, there exists $z\in Z_i$ such that $zg=zx_ig\neq 0$. Again using the definition of $I$, we conclude that $f\in I$.
	
	Now if $J$ is another $\covt$-ideal containing $e$ and $g\in I$, then there exists $Z$ tight covering of $g$ such that $Z\scj\lset{e}\scj J$, and therefore $g\in J$. Hence $I\scj J$.
\end{proof}

\begin{lemma}\label{lemma:filter.disjoint.from.ideal}
	Let $I$ be a $\covt$-ideal and suppose that $e\in E\setminus I$. Then, there exists $\xi\in \tfilt(E)$ such that $e\in \xi$ and $\xi\cap I=\emptyset$.
\end{lemma}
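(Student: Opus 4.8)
The plan is to build $\xi$ by a Zorn's lemma argument and then deduce tightness from maximality. Let $\mathcal{P}$ be the set of all filters $\eta$ in $E$ with $e\in\eta$ and $\eta\cap I=\emptyset$, ordered by inclusion. The principal filter $\uset{e}$ lies in $\mathcal{P}$: it contains $e$, and if some $f\in\uset{e}$ belonged to $I$ then $e\le f$ and lower-closedness of $I$ would give $e\in I$, contrary to hypothesis. The union of a chain in $\mathcal{P}$ is again a filter---directedness comes from closure under meets, which in the semilattice $E$ is the product---and it still contains $e$ and misses $I$; it is proper since $0\notin\eta$ for every $\eta\in\mathcal{P}$. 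Hence Zorn's lemma produces a maximal $\xi\in\mathcal{P}$, which is automatically a filter with $e\in\xi$ and $\xi\cap I=\emptyset$. It remains to prove that $\xi$ is tight.

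Before that I would record that $0\in I$: one checks $\emptyset\in\covt(0)$, and since $I$ is $\covt$-closed and $\emptyset\scj I$, this forces $0\in I$. Now I would argue by contradiction using the characterization of tight filters recalled just before Lemma~\ref{lemma:union.Ve.tight.filters}. If $\xi$ is not tight, there are $a\in\xi$ and a tight covering $Z=\{z_1,\dots,z_n\}$ of $a$ with $z_i\notin\xi$ for every $i$. Fix $i$. Because $z_i\notin\xi$, maximality of $\xi$ forbids the filter generated by $\xi\cup\{z_i\}$---namely $\{f\mid\exists b\in\xi,\ bz_i\le f\}$---from lying in $\mathcal{P}$; so either it is improper (some $b_iz_i=0$) or it meets $I$ (some $b_iz_i\in I$, using that $I$ is lower closed). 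In both cases, using $0\in I$ to absorb the improper alternative, I obtain $b_i\in\xi$ with $b_iz_i\in I$.

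The decisive step, and the one I expect to be the main obstacle, is to assemble these local facts into a single membership $ab\in I$. Put $b=b_1\cdots b_n\in\xi$, so that $ab\in\xi$ and hence $ab\neq0$. Applying the coverage axiom to $Z\in\covt(a)$ with the multiplier $ab$ gives $(ab)Z\in\covt((ab)a)=\covt(ab)$, a tight covering of $ab$. For each $j$, since $b\le b_j$ in the semilattice we have $abz_j\le b_jz_j\in I$, whence $abz_j\in I$ by lower-closedness; thus $(ab)Z\scj I$. As $I$ is $\covt$-closed and $(ab)Z$ is a tight covering of $ab$ contained in $I$, we conclude $ab\in I$, contradicting $ab\in\xi$ and $\xi\cap I=\emptyset$. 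Therefore $\xi$ is tight, which completes the proof.

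The remaining verifications are routine bookkeeping in a commutative idempotent setting: identifying the generated filter with $\{f\mid\exists b\in\xi,\ bz_i\le f\}$ and recognizing that properness is exactly the condition $bz_i\neq0$ for all $b\in\xi$, and deriving $abz_j\le b_jz_j$ from $b\le b_j$. None of this uses more than the semilattice structure of $E$, so the argument should close cleanly once the maximality dichotomy is in place.
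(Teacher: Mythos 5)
Your proof is correct and follows essentially the same route as the paper's: Zorn's lemma produces a maximal filter $\xi$ containing $e$ and disjoint from $I$, the maximality dichotomy yields for each $z_i$ in a putative bad tight cover an element $b_i\in\xi$ with $b_iz_i\in I$, and the meet $h=ab_1\cdots b_n\in\xi$ together with the translated tight cover $hZ\scj I$ forces $h\in I$, a contradiction. Your explicit verification that $0\in I$ (via $\emptyset\in\covt(0)$) is a small detail the paper leaves implicit.
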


\begin{proof}
	Since $e\notin I$, the filter $\uset{e}$ is a filter such that $\uset{e}\cap I=\emptyset$ and $e\in \uset{e}$. Using Zorn's lemma, we can find a maximal filter $\xi$ with the property that $e\in\xi$ and $\xi\cap I=\emptyset$. Suppose that $\xi$ is not tight. Then, there exist $f\in\xi$ and $Z=\{z_1,\ldots,z_n\}$ a tight covering of $f$ such that $Z\cap\xi=\emptyset$.
	
	We claim that for every $z\in Z$, there exists $g\in \xi$ such that $gz\in I$. If $z$ is such that $gz=0$, for some $g\in\xi$, then we can choose this $g$, since $0\in I$. For $z$ such that $gz\neq 0$ for all $g\in\xi$, then $\uset{z\xi}$ if a filter strictly larger then $\xi$. By the maximality of $\xi$, we have that $I\cap \uset{z\xi}\neq \emptyset$, from where it follows the existence of $g\in\xi$ with $gz\in I$.
	
	For each $i=1,\ldots,n$, let $g_i\in\xi$ be such that $g_iz_i\in I$, and define $h=fg_1\ldots g_n$. We notice that $h\in\xi$, and $hZ$ is a tight covering of $h$. Since for each $i=1,\ldots,n$, we have that $hz_i\leq g_iz_i$, which implies that $hZ\scj I$. Since $I$ is a $\covt$-ideal, this implies that $h\in I$, contradicting the fact that $\xi\cap I=\emptyset$.
\end{proof}

\begin{theorem}\label{thm:tight.cov.weak.top}
	Let $E$ be an semilattice with 0 and $\covt$ the tight coverage on $E$. Then $F_{\covt}(E)\cong \tau_E$ as frames.
\end{theorem}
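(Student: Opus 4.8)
The plan is to build an explicit frame homomorphism $F_{\covt}(E)\to\tau_E$ from the universal property of Theorem \ref{thm:univesal.property} and then verify it is a bijection. First I would consider the assignment $E\to\tau_E$, $e\mapsto V_e$. Viewing $E$ as an inverse semigroup consisting only of idempotents and $\tau_E$ as a pseudogroup (namely a frame) consisting only of idempotents, this map is automatically idempotent-pure. It is a semigroup homomorphism since $V_{ef}=V_e\cap V_f$ and $V_0=\emptyset$ (a tight filter is meet-closed, upward closed, and never contains $0$). By Lemma \ref{lemma:union.Ve.tight.filters}, for every $e\in E$ and every tight covering $Z$ of $e$ we have $V_e=\bigcup_{z\in Z}V_z=\bigvee_{z\in Z}V_z$, so the map is $\covt$-cover-to-join. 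Theorem \ref{thm:univesal.property} then produces a unique frame homomorphism $\til{\theta}:F_{\covt}(E)\to\tau_E$ with $\til{\theta}(\pi(e))=V_e$; since each $I\in F_{\covt}(E)$ is a $\covt$-closed order ideal satisfying $I=\bigvee_{e\in I}\pi(e)$ (Remark \ref{rem:universal.pseudogroup}) and $\til{\theta}$ preserves joins, concretely $\til{\theta}(I)=\bigcup_{e\in I}V_e$.

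Surjectivity of $\til{\theta}$ would then be immediate: every open set of $\tau_E$ is a union of basic opens $V_e$, each $V_e=\til{\theta}(\pi(e))$ lies in the image, and $\til{\theta}$ preserves arbitrary joins.

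The heart of the argument is injectivity, and this is where I would invoke Lemma \ref{lemma:filter.disjoint.from.ideal}. I claim that for a $\covt$-closed ideal $I$ and $e\in E$,
\[ e\in I \iff V_e\scj\til{\theta}(I). \]
The forward implication is clear. For the converse, suppose $e\notin I$; since $I$ is a $\covt$-ideal, Lemma \ref{lemma:filter.disjoint.from.ideal} supplies a tight filter $\xi$ with $e\in\xi$ and $\xi\cap I=\emptyset$. Then $\xi\in V_e$, while $\xi\notin V_g$ for all $g\in I$ (otherwise $g\in\xi\cap I$), so $\xi\in V_e\setminus\til{\theta}(I)$ and hence $V_e\not\scj\til{\theta}(I)$. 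The displayed equivalence shows $I=\{e\in E\mid V_e\scj\til{\theta}(I)\}$, so $I$ is completely determined by $\til{\theta}(I)$, and therefore $\til{\theta}$ is injective.

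Finally, I would note that a bijective frame homomorphism is a frame isomorphism: $\til{\theta}$ is monotone, and if $\til{\theta}(I)\scj\til{\theta}(J)$ then $\til{\theta}(I\vee J)=\til{\theta}(I)\vee\til{\theta}(J)=\til{\theta}(J)$, so injectivity forces $I\vee J=J$, i.e.\ $I\scj J$; thus $\til{\theta}^{-1}$ is monotone as well, making $\til{\theta}$ an order isomorphism of complete lattices that preserves finite meets and arbitrary joins in both directions. This yields $F_{\covt}(E)\cong\tau_E$. The main obstacle is the injectivity step: it rests entirely on being able to separate each $e\notin I$ from $I$ by a tight filter, which is precisely the content of Lemma \ref{lemma:filter.disjoint.from.ideal}.
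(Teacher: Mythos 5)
Your proof is correct, and while it opens exactly as the paper does (the map $e\mapsto V_e$ is a $\covt$-cover-to-join semilattice homomorphism by Lemma \ref{lemma:union.Ve.tight.filters}, whence a frame homomorphism $F_{\covt}(E)\to\tau_E$ sending $\pi(e)$ to $V_e$), the second half takes a genuinely different and more economical route. The paper builds an explicit inverse: it realizes $\tau_E$ as $F_{\cov}(B)$ for the basis $B=\{V_e\}_{e\in E}$ via Lemma \ref{lemma:semigroup.generates.pseudogroup}, which forces it to check that $V_e\mapsto\pi(e)$ is well defined (i.e.\ $V_e=V_f$ implies $\pi(e)=\pi(f)$, an argument relying on the description of $\nu(\lset{e})$ in Lemma \ref{lemma:tight.nucleus} and on $V_g\neq\emptyset$ for $g\neq 0$) and then that this map is cover-to-join, the latter being where Lemma \ref{lemma:filter.disjoint.from.ideal} enters. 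You instead prove bijectivity of the forward map directly: surjectivity is free since the $V_e$ generate $\tau_E$ under unions, and injectivity follows from the clean equivalence $e\in I\iff V_e\scj\bigcup_{g\in I}V_g$, whose nontrivial direction is precisely the separation statement of Lemma \ref{lemma:filter.disjoint.from.ideal}. So both arguments hinge on the same key lemma, but yours bypasses Lemma \ref{lemma:tight.nucleus}, Lemma \ref{lemma:semigroup.generates.pseudogroup}, and the well-definedness check entirely, at the modest cost of the (standard, and correctly supplied) observation that a bijective frame homomorphism is a frame isomorphism. The paper's detour does have the side benefit of exhibiting the inverse $\psi$ explicitly on generators, but as a proof of the isomorphism your version is shorter and self-contained.
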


\begin{proof}
	By Lemma \ref{lemma:union.Ve.tight.filters}, the map $e\in E\mapsto V_e\in\tau_E$ is a $\covt$-cover-to-join map. By the universal property of $F_{\covt}(E)$ there is frame homomorphism $\phi:F_{\covt}(E)\to\tau_E$ such that $\phi(\pi(e))=V_e$, where $\pi(e)=\nu(\lset{e})$ as in Theorem \ref{thm:univesal.property}.
		
	Since $B=\{V_e\}_{e\in E}$ is a subsemilattice of $\tau_E$ that is a basis, by Lemma \ref{lemma:semigroup.generates.pseudogroup}, the coverage $\cov$ on $B$ given by the union relations on $B$ is such that $\tau_E\cong F_{\cov}(B)$. We want to define a map $\theta:B\to F_{\covt}(E)$ by $\theta(V_e)=\pi(e)$. In order to show that $\theta$ is well-defined, we have to show that for $e,f\in E$ such that $V_e=V_f$, we have that $\pi(e)=\pi(f)$. Suppose that $g\in\pi(e)$, then by Lemma \ref{lemma:tight.nucleus}, there exists a tight covering $Z$ of $g$ such that $Z\scj\lset{e}$. We claim that the set $fZ=\{fz\mid z\in Z\}$ is also a tight covering of $g$. Let $h\leq g$ be such that $h\neq 0$, then there exists $z_0$ such that $z_0h\neq 0$. Since $V_{z_0h}\neq\emptyset$, there exists $\xi\in\tfilt(E)$ such that $z_0h\in\xi$. Observe that $z_0h\leq z_0\leq e$, so that $\xi\in V_e=V_f$. This implies that $f\in\xi$ and, since $\xi$ is a filter $fz_0h\neq 0$. This proves that $fZ$ is a tight covering of $g$, which, by Lemma \ref{lemma:tight.nucleus} implies that $g\in \pi(f)$. Hence $\pi(e)\scj \pi(f)$, and the other inclusion is analogous.
	
	For $e,f\in E$, we have that $\theta(V_e\cap V_f)=\theta(V_{ef})=\pi(ef)=\pi(e)\pi(f)$, and hence $\theta$ is a semilattice homomorphism. In order to use the universal property of $F_{\cov}(B)$, we have to prove that $\theta$ is a $\cov$-cover-to-join map. Let $e\in E$ and $X\scj E$ be such that $V_e=\bigcup_{x\in X} V_x$. Since $\theta$ preserves the order, $\bigvee_{x\in X} \pi(x) \leq \pi(e)$. By Lemma \ref{lemma:filter.disjoint.from.ideal}, if $e\notin \bigvee_{x\in X} \pi(x)$, there would exist a filter $\xi\in V_e$ such that $\xi\cap \bigvee_{x\in X} \pi(x)=\emptyset$, which would imply that $\xi\notin \bigcup_{x\in X} V_x$. Hence $e\in \bigvee_{x\in X} \pi(x)$, and since $\pi(e)$ is the smallest $\covt$-ideal containing $e$, we get $\pi(e)\leq \bigvee_{x\in X} \pi(x)$. We conclude that $\theta$ is a $\cov$-cover-to-join map. Using the universal property of $F_{\cov}(B)$ and the isomorphism $F_{\cov}(B)\cong\tau_E$, we find a frame homomorphism $\psi:\tau_E\to F_{\covt}(E)$ such that $\psi(V_e)=\pi(e)$. Then $\psi$ is the inverse of $\phi$ and $\phi$ is a frame isomorphism.
\end{proof}

\begin{corollary}
	The space $(\tfilt(E),\tau_E)$ is homeomorphic to $\Sp{F_{\covt}(E)}$.
\end{corollary}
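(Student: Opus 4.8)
The plan is to obtain the corollary by combining the frame isomorphism of Theorem \ref{thm:tight.cov.weak.top} with the soberness established in Proposition \ref{prop:tight.filters.sober}, via the duality between sober spaces and spatial frames encoded by the spectrum functor. Recall from the Remark in Subsection \ref{subsec:groupoid} that for a frame $F$ the space $\Sp{F}=\gr{F}$ is the set of completely prime filters of $F$ topologized by the sets $V_s$, and that a topological space is sober precisely when its canonical map into the spectrum of its frame of open sets, sending a point to the completely prime filter of opens containing it, is a homeomorphism.

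First I would use Proposition \ref{prop:tight.filters.sober} to conclude that the canonical map $\eta:(\tfilt(E),\tau_E)\to\Sp{\tau_E}$, given by $\eta(\xi)=\{V\in\tau_E\mid\xi\in V\}$, is a homeomorphism. Next, Theorem \ref{thm:tight.cov.weak.top} supplies a frame isomorphism $\phi:F_{\covt}(E)\to\tau_E$ with $\phi(\pi(e))=V_e$, and I would apply the spectrum functor to it. Since $\Sp{-}$ is (contravariantly) functorial and a frame isomorphism transports completely prime filters bijectively while matching the basic open sets that define the two spectrum topologies, $\Sp{\phi}:\Sp{\tau_E}\to\Sp{F_{\covt}(E)}$, $P\mapsto\phi^{-1}(P)$, is a homeomorphism. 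Composing, $\Sp{\phi}\circ\eta$ is the required homeomorphism; explicitly it sends a tight filter $\xi$ to the completely prime filter $\usetr{\{\pi(e)\mid e\in\xi\}}{F_{\covt}(E)}$.

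The corollary is essentially immediate once these two results are in hand, so I do not expect a genuine obstacle; the only matters needing care are routine. One should confirm that $\{V\in\tau_E\mid\xi\in V\}$ is indeed a completely prime filter of $\tau_E$ (guaranteed by the $T_0$ and meet-irreducibility analysis underlying Proposition \ref{prop:tight.filters.sober}), and that $\Sp{\phi}$ genuinely respects the basic opens on both sides, which reduces to the identity $\phi^{-1}(V_e)=\pi(e)$ coming from $\phi(\pi(e))=V_e$. Both follow directly from $\phi$ being a frame isomorphism.
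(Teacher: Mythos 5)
Your proposal is correct and follows exactly the paper's argument: soberness (Proposition \ref{prop:tight.filters.sober}) gives the homeomorphism $(\tfilt(E),\tau_E)\cong\Sp{\tau_E}$, and the frame isomorphism of Theorem \ref{thm:tight.cov.weak.top} transports this to $\Sp{F_{\covt}(E)}$ via the spectrum functor. The extra details you supply (the explicit form of the composite map and the check on basic opens) are routine verifications that the paper omits.
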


\begin{proof}
	By Proposition \ref{prop:tight.filters.sober} $(\tfilt(E),\tau_E)$ is sober, so that $(\tfilt(E),\tau_E)$ is homeomorphic to $\Sp{\tau_E}$. From Theorem \ref{thm:tight.cov.weak.top}, we get that $(\tfilt(E),\tau_E)$ is homeomorphic to $\Sp{F_{\covt}(E)}$.
\end{proof}

Now, we work with Exel's tight groupoid defined in \cite{MR2419901}. For that, let $S$ be an inverse semigroup with $0$. Exel's definition uses the natural action of $S$ on $\tfilt(E(S))$, however we will use Lawson and Lenz's description. They showed that Exel's tight groupoid is the reduction of Paterson's universal groupoid $\grl(S)$ to $\tfilt(E(S))$ with the patch topology \cite[Subsection 5.2]{MR3077869}. We want to arrive at this same result using the results proved in this paper. For that we extend the definition of the tight coverage to $S$ as in \cite{MR3077869}. For each $s\in S$, we define we define $\covt(s)$ to be all finite subsets $Z\scj\lset{s}$ such that for every $0\neq t\leq s$, there exists $z\in Z$ such that $\lset{y}\cap\lset{z}\neq \{0\}$. We can then use the tight coverage and the map $\theta:S\to\bis(\grp_u(S))$ given by $\theta(s)=U_s$ together with Lemma \ref{lem:from.cov.inv.semigroup.to.cov.pseudogroup} to find a coverage on $\bis(\grp_u(S))$. By means of Propositions \ref{prop:restriction.cov.idempotents} and \ref{prop:reduction.groupoid}, it suffices to work with the idempotents of $S$.

\begin{theorem}\label{thm:reduction.filters.to.tight}
	Let $E$ be a semilattice with $0$, $\filt(E)$ the set of all filter in $E$ and $\tau_{patch}$ the patch topology on $E$. Consider $\theta:E\to \tau_{patch}$ given by $\theta(e)=U_e$ and $\cov$ the coverage on $\tau_{patch}$ induced by $\theta$ and the tight coverage $\covt$ on $E$. Then $\tfilt(E)=\filt(E)_{\cov}$.
\end{theorem}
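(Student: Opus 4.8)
The plan is to apply Lemma \ref{lem:subspace.from.coverage} to the ambient space $X=(\filt(E),\tau_{patch})$, from which $\filt(E)_{\cov}$ is built, and then to match the resulting pointwise description of $\filt(E)_{\cov}$ with the known characterization of tight filters. First I would check the hypotheses of that lemma. Since $E$ consists only of idempotents, $\grp_u(E)$ is just the space $(\filt(E),\tau_{patch})$ and $\bis(\grp_u(E))=\tau_{patch}$. This space is Hausdorff: if $\xi\neq\eta$, say $e\in\xi\setminus\eta$, then $\xi\in U_e$ while $\eta\in U_e^c$, and these are disjoint; moreover $U_e^c$ is patch-open, since for any $f\in\eta$ one has $fe\leq f$ and $fe\notin\eta$ (else $e\in\eta$), so $\eta\in U_{f:fe}\scj U_e^c$. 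Being Hausdorff, $X$ is $T_1$ and sober, and $B=\tau_{patch}$ is trivially a basis closed under intersection. Hence Lemma \ref{lem:subspace.from.coverage} gives
\[\filt(E)_{\cov}=\{\xi\in\filt(E)\mid \text{if }Z\in\cov(W)\text{ and }\xi\notin V\text{ for all }V\in Z,\text{ then }\xi\notin W\},\]
where $W$ ranges over $\tau_{patch}$ and $\cov$ is the induced coverage from Lemma \ref{lem:from.cov.inv.semigroup.to.cov.pseudogroup}.

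Next I would recall from \cite[Proposition 2.25]{MR2974110} that $\xi\in\tfilt(E)$ if and only if for every $a\in\xi$ and every $Z\in\covt(a)$ one has $Z\cap\xi\neq\emptyset$, and then show that the displayed condition is equivalent to this one. Two elementary translations drive the argument: because $\grp_u(E)$ is a space, the pseudogroup product on $\tau_{patch}$ is intersection and $U_e\cap U_f=U_{ef}$; and for a basic open one has $\xi\in U_e$ if and only if $e\in\xi$.

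I would then split the coverings in $\cov(W)$ into the two families produced by Lemma \ref{lem:from.cov.inv.semigroup.to.cov.pseudogroup}. For a join-covering, a compatible $I\scj\tau_{patch}$ with $\bigvee I=W$, the join of opens is their union, so $\xi\in W$ is equivalent to $\xi\in V$ for some $V\in I$; the implication in the description of $\filt(E)_{\cov}$ is then vacuously true for every $\xi$, and such coverings impose nothing. The informative coverings are the tight-induced ones $\theta(Z)$, $q\theta(Z)$, $\theta(Z)r$, $q\theta(Z)r$ with $Z\in\covt(a)$, covering respectively $U_a$, $q\cap U_a$, $U_a\cap r$, $q\cap U_a\cap r$. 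Writing $O=q\cap r$ (taking $O=X$ when no prefactor is present), each has the form $\{O\cap U_z\mid z\in Z\}$ covering $O\cap U_a$. If $\xi\notin O$ both hypothesis and conclusion hold trivially; if $\xi\in O$, then $\xi\notin O\cap U_z$ reduces to $z\notin\xi$ and $\xi\notin O\cap U_a$ to $a\notin\xi$, so the implication becomes ``$a\in\xi\Rightarrow Z\cap\xi\neq\emptyset$'', exactly the tight condition for $(a,Z)$.

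Running over all $a$ and all $Z\in\covt(a)$ (which occur already among the unprefactored coverings $\theta(Z)$, where $O=X$ forces $\xi\in O$), this shows $\xi\in\filt(E)_{\cov}$ if and only if $a\in\xi$ implies $Z\cap\xi\neq\emptyset$ for all $Z\in\covt(a)$, which is precisely the characterization of $\tfilt(E)$; as both sets carry the subspace topology of $(\filt(E),\tau_{patch})$, the equality is one of spaces. I expect the main obstacle to be the bookkeeping in the previous paragraph: one must confirm that the prefactored coverings $q\theta(Z)r$ never impose a condition stricter than the plain $\theta(Z)$ (handled by the case split on $\xi\in O$) and that the order-theoretic join-coverings, present because $\cov$ contains all compatible join relations, contribute nothing. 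Establishing Hausdorffness of the patch topology, needed to invoke Lemma \ref{lem:subspace.from.coverage}, is the other point requiring a short independent argument.
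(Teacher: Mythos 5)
Your proposal is correct and follows essentially the same route as the paper: invoke Lemma \ref{lem:subspace.from.coverage} on the Hausdorff space $(\filt(E),\tau_{patch})$, observe that the join-coverings impose nothing, and reduce the prefactored coverings $q\theta(Z)r$ to the plain tight condition via a case split, matching the characterization of tight filters from \cite[Proposition 2.25]{MR2974110}. The only addition is your explicit verification that the patch topology is Hausdorff, which the paper asserts without proof.
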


\begin{proof}
	Since $(\filt(E),\tau_{patch})$ is Hausdorff, we can use Lemma \ref{lem:subspace.from.coverage}. Let $\xi\in\tfilt(E)$ and $U\in\tau_{patch}$ such that $\xi\notin U$. For a covering $X$ of $U$ such that $\bigcup X=U$, it is clear that $\xi\notin W$ for all $W\in X$. For the other kind of coverings, we have that $U=V_e\cap W$, where $e\in E$ and $W\in\tau_{patch}$. Let $Z$ be a tight covering of $e$, so that $\{V_z\cap W\}_{z\in Z}$ is a covering of $U$. Since $\xi\notin U$, either $e\notin \xi$ or $\xi\notin W$. In the first case, since $\xi$ is a tight filter, we have that $z\notin\xi$ for all $z\in Z$ so that $\xi\notin V_z\cap W$. In the second case, it immediate that $\xi\notin V_z\cap W$ for all $z\in Z$. Hence $\xi\in \filt(E)_{\cov}$.
	
	Suppose now that $\xi\in\filt(E)_{\cov}$. For $e\in\xi$ and $Z$ tight covering of $e$. In this case $\{V_z\}_{z\in Z}$ is a covering of $V_e$. By Lemma \ref{lem:subspace.from.coverage}, $\xi\in V_z$ for some $z\in Z$, that is, $z\in\xi$ for some $z\in Z$. Hence $\xi\in\tfilt(E)$.
\end{proof}

\begin{corollary}
	Let $S$ be an inverse semigroup with 0, $\grp_u(S)$ the groupoid of all filters of $S$ with the patch topology $\tau_{patch}$. Then the tight groupoid is the reduction of $\grp_u(S)$ to the tight filters on $E(S)$.
\end{corollary}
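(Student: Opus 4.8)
The plan is to feed the universal groupoid into Proposition~\ref{prop:reduction.groupoid} and then read off the reduced unit space from Theorem~\ref{thm:reduction.filters.to.tight}. Concretely, I would take the sober étale groupoid $\grp$ of Proposition~\ref{prop:reduction.groupoid} to be $\grp=\grp_u(S)=(\grl(S),\tau_{patch})$, and the generating inverse subsemigroup of $\bis(\grp)$ to be the image $\theta(S)=\{U_s\}_{s\in S}$ of the idempotent-pure map $\theta(s)=U_s$. Before invoking the proposition one must discharge its standing hypotheses: $\grp_u(S)$ is étale by construction of Paterson's universal groupoid, and it is sober because $\tau_{patch}$ is induced from the product topology on $\{0,1\}^S$ and is therefore Hausdorff; the $U_s$ are compact open bisections closed under product and inverse and form a basis, so $\theta(S)$ is indeed an inverse subsemigroup generating $\bis(\grp_u(S))$. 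The coverage fed to Proposition~\ref{prop:reduction.groupoid} is the one obtained by transporting the tight coverage $\covt$ on $S$ along $\theta$ via Lemma~\ref{lem:from.cov.inv.semigroup.to.cov.pseudogroup}.

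Granting these checks, Proposition~\ref{prop:reduction.groupoid} produces a topological groupoid isomorphism between the groupoid built from the transported coverage and the reduction of $\grp_u(S)$ to the coverage subspace $\grp_u(S)^{(0)}_{\cov}$ of its unit space, in the notation of that proposition. The remaining task is to identify this subspace with the tight filters. Here I would use Proposition~\ref{prop:restriction.cov.idempotents} to observe that the coverage on $S$ is determined by its restriction to $E:=E(S)$, so that the reduced unit space depends only on the (tight) coverage on $E$; this is the precise meaning of the assertion that ``it suffices to work with the idempotents of $S$.'' Identifying $\grp_u(S)^{(0)}$ with $\filt(E)$ carrying the patch topology, Theorem~\ref{thm:reduction.filters.to.tight} gives exactly $\grp_u(S)^{(0)}_{\cov}=\filt(E)_{\cov}=\tfilt(E)$.

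Combining the two steps, the coverage construction exhibits our groupoid as the reduction of $\grp_u(S)$ to $\tfilt(E(S))$. By the description of Exel's tight groupoid due to Lawson and Lenz recalled just before Theorem~\ref{thm:reduction.filters.to.tight}, this reduction is the tight groupoid, which is the assertion.

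I expect the principal difficulty to lie at the seam between the $S$-level and the $E$-level data rather than in any single hard estimate, since the computational heart is already carried by Theorem~\ref{thm:reduction.filters.to.tight}. One must verify that the coverage on $\bis(\grp_u(S))$ transported from $S$ restricts on idempotents to \emph{precisely} the coverage feeding Theorem~\ref{thm:reduction.filters.to.tight}, and that the subspace topology that Proposition~\ref{prop:reduction.groupoid} places on $\grp_u(S)^{(0)}_{\cov}$ agrees with the patch topology $\tau_{patch}$ on $\filt(E)$ used there. A secondary point requiring explicit justification is the soberness of $\grp_u(S)$ demanded by Proposition~\ref{prop:reduction.groupoid}.
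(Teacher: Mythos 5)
Your overall route is the paper's: transport the tight coverage along $\theta(s)=U_s$ using Lemma \ref{lem:from.cov.inv.semigroup.to.cov.pseudogroup}, reduce to the idempotents by Proposition \ref{prop:restriction.cov.idempotents}, and conclude with Theorem \ref{thm:reduction.filters.to.tight} and Proposition \ref{prop:reduction.groupoid}. However, two of your hypothesis checks fail as stated. The more serious one: $\theta(S)=\{U_s\}_{s\in S}$ does \emph{not} generate $\bis(\grp_u(S))$. The sets $U_s$ form a basis only for the coarser topology $\tau_S$, whereas $\grp_u(S)$ carries $\tau_{patch}$; a patch-basic set such as $U_{e:f}$ with $f\leq e$ in $E(S)$ is an open bisection that is in general not a union of sets $U_s$ (already for the semilattice $E=\{0,f,e\}$ with $f<e$ one has $U_{e:f}=\{\uset{e}\}$ while $U_e=\{\uset{e},\uset{f}\}$ and $U_f=\{\uset{f}\}$). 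So Proposition \ref{prop:reduction.groupoid} cannot be invoked with $\theta(S)$ as the generating inverse subsemigroup. The paper avoids this by placing the induced coverage of Lemma \ref{lem:from.cov.inv.semigroup.to.cov.pseudogroup} on the \emph{whole} pseudogroup $\bis(\grp_u(S))$, which trivially generates itself and whose induced coverage already contains all join relations (so $\overline{\cov}=\cov$); the extra coverings of the form $q\theta(X)r$ are precisely what the proof of Theorem \ref{thm:reduction.filters.to.tight} needs in order to identify the subspace inside the patch topology rather than inside $\tau_S$, which is not $T_1$ and hence outside the scope of Lemma \ref{lem:subspace.from.coverage}.

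The second problem is your soberness argument: the patch topology on the full groupoid $\grl(S)$ is not the subspace topology inherited from $\{0,1\}^S$, and $\grp_u(S)$ need not be Hausdorff (non-Hausdorff tight groupoids occur as its reductions). What is true, and what should be said, is that the unit space $\filt(E(S))$ with the patch topology is Hausdorff, hence sober, and soberness of an étale groupoid in the sense of this paper follows from soberness of its unit space via the germ construction of Lemma \ref{lemma:germs}. With these two repairs your argument coincides with the paper's proof.
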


\begin{proof}
	Consider $\theta:S\to\bis(\grp_u(S))$ given by $\theta(s)=U_s$ and $\cov$ the coverage on $\tau_{patch}$ induced by $\theta$ and the tight coverage $\covt$ on $S$. By Proposition \ref{prop:restriction.cov.idempotents} we can restrict the coverage $\cov$ the idempotents getting the coverage of Theorem \ref{thm:reduction.filters.to.tight}. The result then follows from Theorem \ref{thm:reduction.filters.to.tight} and Proposition \ref{prop:reduction.groupoid}.
\end{proof}

%
%
%

\bibliographystyle{abbrv}
\bibliography{cov_ref}

\begin{thebibliography}{10}

\bibitem{MR3648984}
G.~Boava, G.~G. de~Castro, and F.~de~L.~Mortari.
\newblock Inverse semigroups associated with labelled spaces and their tight
  spectra.
\newblock {\em Semigroup Forum}, 94(3):582--609, 2017.

\bibitem{MR3680957}
G.~Boava, G.~G. de~Castro, and F.~de~L.~Mortari.
\newblock {${\rm C}^*$}-algebras of labelled spaces and their diagonal {${\rm
  C}^*$}-subalgebras.
\newblock {\em J. Math. Anal. Appl.}, 456(1):69--98, 2017.

\bibitem{Gil3}
G.~Boava, G.~G. de~Castro, and F.~d.~L. Mortari.
\newblock Groupoid models for the {C}*-algebra of labelled spaces.
\newblock {\em Bulletin of the Brazilian Mathematical Society, New Series},
  2019.

\bibitem{MR1303779}
A.~Connes.
\newblock {\em Noncommutative geometry}.
\newblock Academic Press, Inc., San Diego, CA, 1994.

\bibitem{de_castro_2020}
G.~G. de~Castro.
\newblock Recovering the boundary path space of a topological graph using
  pointless topology.
\newblock {\em Journal of the Australian Mathematical Society}, page 1–17,
  2020.

\bibitem{MR2419901}
R.~Exel.
\newblock Inverse semigroups and combinatorial {$C\sp \ast$}-algebras.
\newblock {\em Bull. Braz. Math. Soc. (N.S.)}, 39(2):191--313, 2008.

\bibitem{MR0009426}
I.~Gelfand and M.~Naimark.
\newblock On the imbedding of normed rings into the ring of operators in
  {H}ilbert space.
\newblock {\em Rec. Math. [Mat. Sbornik] N.S.}, 12(54):197--213, 1943.

\bibitem{MR698074}
P.~T. Johnstone.
\newblock {\em Stone spaces}, volume~3 of {\em Cambridge Studies in Advanced
  Mathematics}.
\newblock Cambridge University Press, Cambridge, 1982.

\bibitem{MR2067120}
T.~Katsura.
\newblock A class of {$C^\ast$}-algebras generalizing both graph algebras and
  homeomorphism {$C^\ast$}-algebras. {I}. {F}undamental results.
\newblock {\em Trans. Amer. Math. Soc.}, 356(11):4287--4322, 2004.

\bibitem{MR1694900}
M.~V. Lawson.
\newblock {\em Inverse semigroups}.
\newblock World Scientific Publishing Co., Inc., River Edge, NJ, 1998.
\newblock The theory of partial symmetries.

\bibitem{MR2974110}
M.~V. Lawson.
\newblock Non-commutative {S}tone duality: inverse semigroups, topological
  groupoids and {$C^\ast$}-algebras.
\newblock {\em Internat. J. Algebra Comput.}, 22(6):1250058, 47, 2012.

\bibitem{MR3077869}
M.~V. Lawson and D.~H. Lenz.
\newblock Pseudogroups and their \'etale groupoids.
\newblock {\em Adv. Math.}, 244:117--170, 2013.

\bibitem{MR3109745}
M.~V. Lawson, S.~W. Margolis, and B.~Steinberg.
\newblock The \'{e}tale groupoid of an inverse semigroup as a groupoid of
  filters.
\newblock {\em J. Aust. Math. Soc.}, 94(2):234--256, 2013.

\bibitem{MR2465914}
D.~H. Lenz.
\newblock On an order-based construction of a topological groupoid from an
  inverse semigroup.
\newblock {\em Proc. Edinb. Math. Soc. (2)}, 51(2):387--406, 2008.

\bibitem{MR1724106}
A.~L.~T. Paterson.
\newblock {\em Groupoids, inverse semigroups, and their operator algebras},
  volume 170 of {\em Progress in Mathematics}.
\newblock Birkh\"auser Boston, Inc., Boston, MA, 1999.

\bibitem{MR1962477}
A.~L.~T. Paterson.
\newblock Graph inverse semigroups, groupoids and their {$C^\ast$}-algebras.
\newblock {\em J. Operator Theory}, 48(3, suppl.):645--662, 2002.

\bibitem{MR2868166}
J.~Picado and A.~Pultr.
\newblock {\em Frames and locales}.
\newblock Frontiers in Mathematics. Birkh\"auser/Springer Basel AG, Basel,
  2012.
\newblock Topology without points.

\bibitem{MR584266}
J.~Renault.
\newblock {\em A groupoid approach to {$C^{\ast} $}-algebras}, volume 793 of
  {\em Lecture Notes in Mathematics}.
\newblock Springer, Berlin, 1980.

\bibitem{resende2006lectures}
P.~Resende.
\newblock Lectures on {\'e}tale groupoids, inverse semigroups and quantales.
\newblock In {\em Lecture notes for the GAMAP IP Meeting, Antwerp}, page 115.
  Citeseer, 2006.

\bibitem{MR2304314}
P.~Resende.
\newblock \'{E}tale groupoids and their quantales.
\newblock {\em Adv. Math.}, 208(1):147--209, 2007.

\bibitem{MR0325820}
B.~M. Schein.
\newblock Completions, translational hulls and ideal extensions of inverse
  semigroups.
\newblock {\em Czechoslovak Math. J.}, 23(98):575--610, 1973.

\bibitem{MR1507106}
M.~H. Stone.
\newblock Postulates for {B}oolean {A}lgebras and {G}eneralized {B}oolean
  {A}lgebras.
\newblock {\em Amer. J. Math.}, 57(4):703--732, 1935.

\bibitem{MR1002193}
S.~Vickers.
\newblock {\em Topology via logic}, volume~5 of {\em Cambridge Tracts in
  Theoretical Computer Science}.
\newblock Cambridge University Press, Cambridge, 1989.

\bibitem{MR2301938}
T.~Yeend.
\newblock Groupoid models for the {$C^*$}-algebras of topological higher-rank
  graphs.
\newblock {\em J. Operator Theory}, 57(1):95--120, 2007.

\end{thebibliography}
	
\end{document}